\newtheorem{theorem}{Theorem}[section]
\newtheorem{cor}{Corollary}[section]
\newtheorem{lemma}{Lemma}[section]
\newtheorem{proposition}{Proposition}[section]
\newtheorem{remark}{Remark}[section]
\newtheorem{asump}{Hypothesis}[section]
\DeclareMathOperator{\tr}{trace}
\definecolor{darkgreen}{rgb}{0,.6,0}
\newcommand{\E}{\mathbb E}
\newcommand{\PP}{\mathbb P}
\title[Randomized-tamed Milstein Scheme for SDEs]{A Randomized Milstein Scheme for  SDEs with Superlinear Drift Coefficient}
\author{Sani Biswas$^{\mbox{\textasteriskcentered}}$}
\address{\textasteriskcentered$_{\normalfont{\mbox{Corresponding Author}}}$}
\address{\textasteriskcentered$_{\normalfont{\mbox{Centro de Modelamiento Matem\'atico, Universidad de Chile \& IRL 2807 - CNRS. E-Mail: {sbiswas@cmm.uchile.cl}}}}$}
\begin{document}
	
	\begin{abstract}
		This work presents a randomized-tamed Milstein scheme for stochastic differential equations whose drift coefficient exhibits superlinear growth in the state variable and limited temporal regularity, quantified by $\beta$-H\"older continuity with $\beta \in (0,1]$. The scheme combines a taming mechanism to control the superlinear state dependence with a drift randomization strategy designed to address the challenges posed by low temporal regularity. Under suitable assumptions on temporal smoothness, the scheme achieves an optimal strong $\mathscr{L}^p$-convergence rate of order one.

		
	\end{abstract}
	
	\maketitle
	\noindent
	\textbf{Keywords.}    superlinear drift, drift randomization, randomized-tamed Milstein scheme.
	\\ \\
	\textbf{AMS Subject Classifications.}     65C30, 60H35, 65C05, 65C35.
\section{Introduction} \label{sec:intro}
Consider a probability space \((\tilde{\Omega}, \tilde{\mathscr{G}}, \tilde{\mathbb{P}})\) equipped with a filtration \(\{\tilde{\mathscr{G}}_t\}_{t \in [0,T]}\) satisfying the usual conditions. Let \( w = \{w_t\}_{t \in [0,T]} \) be a standard \(\mathbb{R}^m\)-valued Brownian motion defined on this filtered probability space.
Suppose the mappings
\[
\mu : [0,T] \times \mathbb{R}^d \to \mathbb{R}^d \quad \text{and} \quad \rho : [0,T] \times \mathbb{R}^d \to \mathbb{R}^{d \times m}
\]
are measurable with respect to the product \(\sigma\)-algebra \(\mathscr{B}([0,T]) \otimes \mathscr{B}(\mathbb{R}^d)\), where \( T > 0 \) is fixed.

The process \(\{x_t\}_{t \in [0,T]}\) satisfies the stochastic differential equation (SDE)
\begin{equation}\label{eq:sde}
	x_t = x_0 + \int_0^t \mu(s, x_s) \, \mathrm{d}s + \int_0^t \rho(s, x_s) \, \mathrm{d}w_s
\end{equation}
almost surely for every \( t \in [0,T] \). The initial condition \( x_0 \) is assumed to be \(\tilde{\mathscr{G}}_0\)-measurable and independent of the Brownian motion \( w \).
The purpose of this work is to introduce and study a randomized--tamed Milstein scheme for stochastic differential equations \eqref{eq:sde}. We consider the setting where the coefficients may grow superlinearly, and where the drift has low time regularity, specifically \(\varrho\)--H\"older continuity in time for some \(\varrho \in (0,1]\).

Stochastic differential equations (SDEs) driven by Brownian motion are widely used in fields such as finance, engineering, biology, and economics to model systems subject to continuous random fluctuations. For instance, they describe the unpredictable yet smooth evolution of systems influenced by numerous small perturbations~\cite{Kloeden1992,Mao2007}.
Although these classical SDEs are powerful for describing diffusion-like dynamics, they do not inherently capture rare, sudden changes observed in many real phenomena. Extensions involving L\'evy noise have been developed to incorporate jumps and discontinuities \cite{Cont2004,Oksendal2007,Situ2006}, but their analysis and numerical treatment are more intricate and fall outside the scope of this paper.

Because explicit solutions to most  SDEs are unavailable, numerical approximation methods play a vital role. Over the years, both explicit and implicit schemes have been proposed, with rigorous studies on their convergence properties. Comprehensive treatments of these numerical approaches can be found in \cite{Dereich2011,Higham2006, Kloeden1992, Platen1999,  Platen2010}.

\subsection*{A Review of Tamed and Randomized Numerical Schemes for SDEs:}
Extensive discussions of both explicit and implicit numerical approaches for SDEs, along with key theoretical foundations, are presented in~\cite{Situ2006}.
A key challenge in this area arises from the failure of classical explicit schemes to control moment growth, often resulting in finite-time blow-up when applied to SDEs with superlinearly growing coefficients, as rigorously demonstrated in~\cite{hutzenthaler2010} for continuous-path SDEs. Although implicit methods offer enhanced stability in such settings, their practical use is frequently constrained by substantial computational overhead, particularly in high-dimensional problems.
To address these issues, recent efforts have concentrated on constructing explicit schemes that balance numerical stability and computational efficiency despite superlinear growth, typically by employing a taming strategy. Significant advances for jump-free SDEs include the works~\cite{hutzenthaler2015, hutzenthaler2020,hutzenthaler2012, Kumar2020, Sabanis2013, Sabanis2016, Tretyakov2013}. Parallel developments concerning L'evy-driven SDEs, although beyond the scope of this work, are detailed in~\cite{Chen2019, Dareiotis2016, Kumar2021a,Kumar2017a, Kumar2017}.

SDEs with time-irregular drift often exhibit convergence issues under the classical Euler scheme due to the irregularity in time, as demonstrated in~\cite[Section~3]{Przybylowicz2014}. 
To address these challenges, modified approaches such as the Euler-integral schemes~\cite{Dareiotis2016, Kumar2017a} and their randomized counterparts~\cite{Przybylowicz2014} have been introduced, leveraging drift integrals or random time evaluations while preserving optimal convergence rates. Extensive error analysis for both jump-free and jump-driven settings is provided in~\cite{Przybylowicz2015a, Przybylowicz2015b, Przybylowicz2022}.
Additional advances in randomized numerical methods include the thorough investigation of Milstein schemes for jump-free SDEs in~\cite{Kruse2019, Pawel2021}. Notably,~\cite{Kruse2019} introduces a two-stage variant of the Milstein scheme and provides a detailed study of its convergence behavior. Extensions to SDEs with jump components are explored in~\cite{Verena2024}, offering new insights into scheme design and error analysis. Furthermore, randomized Milstein schemes tailored to McKean--Vlasov equations with common noise have been developed in~\cite{Biswas2022}, broadening the applicability of these methods to mean-field models.

\subsection*{Obstacles, Innovations and Contributions:}  
	\sloppy
	Theorem~\ref{thm:main_result} establishes the strong $\mathscr{L}^p$-convergence of the proposed randomized-tamed Milstein scheme \eqref{eq:scm} for the SDE~\eqref{eq:sde} with superlinear drift, achieving the optimal convergence rate of order~$1.0$. This result aligns with prior work under suitable H\"older continuity assumptions on the linear drift and diffusion coefficients.
	Recent studies have shown that randomizing the drift coefficient can improve Milstein-type schemes for time-inhomogeneous SDEs; see~\cite{Kruse2019,Pawel2021,Verena2024}. However, randomization alone is insufficient to control the superlinear growth in the drift’s state component. This necessitates incorporating taming strategies alongside randomization by applying the taming approach from~\cite{Kumar2020} directly to the randomized drift.
	The resulting {randomized-tamed Milstein scheme}~\eqref{eq:scm} effectively accommodates drifts with low time regularity and achieves a higher convergence order than the standard tamed Milstein method.


However, the combination of taming and randomization introduces substantial analytical challenges that lie beyond the reach of existing convergence theories. Addressing these complexities requires the development of novel techniques to rigorously establish strong convergence of the scheme \eqref{eq:scm}. Consequently, the analysis introduces an auxiliary process governed by~\eqref{eq:auxiliary_equation}, which plays a key role in connecting the original dynamics~\eqref{eq:sde} with the numerical scheme~\eqref{eq:scm} and in facilitating the derivation of essential estimates. These estimates, which form the backbone of the convergence analysis, are presented in Lemmas~\ref{lem:estimate_first_term_MR} and~\ref{lem:estimate_last_term_MR}.

To the best of our knowledge, this work represents the first rigorous study addressing randomized version of tamed Milstein scheme for SDEs with superlinear coefficients, extending novelty even to deterministic differential equations. The analytical framework developed herein establishes a foundation for future investigations into the strong convergence of higher-order randomized-tamed schemes for L\'evy-driven SDEs with superlinear growth, as well as for McKean–Vlasov-type SDEs.

Complementing this, our framework naturally applies to neuroscience models such as the well-studied FitzHugh--Nagumo system.
Consider the stochastic version of this model from \cite{Baladron2012, Tuckwell2003} with constant noise intensities. The time-inhomogeneous system with multiplicative noise reads
\begin{align}
	\label{eq:sde-system}
	\begin{aligned}
		dV_t &= \bigl( V_t - \tfrac{V_t^3}{3} - R_t + I_{\mathrm{ext}}(t) \bigr) dt + \sigma V_t \, dw_t, \\[0.8ex]
		dR_t &= \alpha ( V_t + \gamma - \lambda R_t ) dt
	\end{aligned}
\end{align}
where \(V_t\) denotes the membrane potential, \(R_t\) the recovery variable, and \(I_{\mathrm{ext}}(t)\) is a time-dependent external input with appropriate regularity. The positive constants \(\sigma, \alpha, \gamma\) and \( \lambda\) govern system parameters, and \(w_t\) is a  standard Brownian motion.
Further, Section~\ref{sec:numerics} presents numerical experiments based on this model to validate the theoretical findings. 

\subsection*{Organization}  Section \ref{sec:randomized_scm} presents the randomized-tamed Milstein scheme together with the statement of the main convergence result. The corresponding proofs are provided in Section \ref{sec:convergence_rate}, while Section \ref{sec:numerics} reports numerical experiments that substantiate the theoretical analysis.

This section concludes with a summary of useful notations.
\subsection*{Notations} 
The symbol \( |\cdot| \) denotes both the Euclidean norm on \( \mathbb{R}^d \) and the Frobenius norm on \( \mathbb{R}^{d \times m} \), depending on the context. For a topological space \( \mathcal{T} \), the corresponding Borel \(\sigma\)-algebra is denoted by \( \mathscr{B}(\mathcal{T}) \).
For a vector \( v \in \mathbb{R}^d \), the \( i \)-th component is denoted by \( v^i \), for \( i \in \{1, \dots, d\} \). If \( M \in \mathbb{R}^{d \times m} \) is a matrix, then \( M^{ij} \) denotes the entry in the \( i \)-th row and \( j \)-th column, for \( i \in \{1, \dots, d\} \) and \( j \in \{1, \dots, m\} \). The transpose and trace of \( M \) are denoted by \( M^* \) and \( \operatorname{trace}(M) \), respectively.
For a function \( f : [0,T] \times \mathbb{R}^d \to \mathbb{R} \), the gradient and Hessian with respect to the state variable \( x \) are denoted by
\(\partial_x f : [0,T] \times \mathbb{R}^d \to \mathbb{R}^d\)  
and \(\partial_x^2 f : [0,T] \times \mathbb{R}^d \to \mathbb{R}^{d \times d}\), respectively.
The expectation operator \( \mathbb{E} \) corresponds to integration with respect to the underlying probability measure. The notation \( X \in \mathscr{L}^p \) indicates that the random variable \( X \) has a finite \( p \)-th moment, i.e.,
\(
\mathbb{E}|X|^p < \infty.
\)
A generic positive constant that does not depend on the discretization parameter is denoted by \( C \); its value may vary between different occurrences.


\section{Preliminaries and Main Result}  
This section begins by addressing the well-posedness and moment stability of the SDE~\eqref{eq:sde}. Subsequently, Theorem~\ref{thm:main_result} states the principal result of this work, namely the strong convergence rate of the proposed randomized-tamed Milstein scheme~\eqref{eq:scm} for the SDE~\eqref{eq:sde}. 

\subsection*{Well-posedness and Moment Bound:}
To guarantee the well-posedness and moment stability of the SDE \eqref{eq:sde}, assume throughout that $q \geq 2$ is fixed.

\begin{asump}\label{asum:ic}
	$\E|x_0|^{q}<\infty$.
\end{asump}
\begin{asump} \label{asum:monotonocity}
	For any $s\in[0,T]$ and $x, y\in \mathbb R^d$, there is a   constant $L>0$,  such that
	\begin{align*}
		(x-y)\big(\mu(s,x)-\mu(s,y)\big)\vee |\rho(s,x)-\rho(s,y)|^2 
		&\leq   L|x-y|^2,
		\\
		|\rho(s,0)|&\leq L.
	\end{align*}
\end{asump}

\begin{asump}
	\label{asum:coercivity}
	For any \( s \in [0,T] \) and \( x \in \mathbb{R}^d \), there is a constant \( L>0 \), such that
	\begin{align*}
		x\mu(s,x) \leq  L(1+|x|^{2}).
	\end{align*}
	
\end{asump}

\begin{asump} \label{asum:continuity}
	For any $s\in  [0,T]$, the map \, $\mathbb R^d\ni x\mapsto \mu(s,x)$ is  continuous.
\end{asump}

The following proposition establishes the well-posedness and moment bounds for the SDE~\eqref{eq:sde}, as a specialization of the general result presented in~\cite{Mao2007}.

\begin{proposition} \label{prop:mb:sde}
	Suppose that Hypotheses~\mbox{\normalfont\ref{asum:monotonocity}} to~\mbox{\normalfont\ref{asum:continuity}} hold. Then the SDE~\eqref{eq:sde} admits a unique strong solution. Moreover, if Assumption~\mbox{\normalfont\ref{asum:ic}} also holds, there exists a constant $C > 0$, such that
	\(
	\sup_{t \in [0,T]} \E|x_t|^q \leq C.
	\)
\end{proposition}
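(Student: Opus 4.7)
The plan is to derive the statement as a direct application of the classical theory of SDEs with one-sided Lipschitz drift and globally Lipschitz diffusion, as developed by Krylov and Rozovskii and presented, e.g., in Mao's monograph. Existence and uniqueness will follow from Hypotheses~\ref{asum:monotonocity}--\ref{asum:continuity}, and the $q$-th moment bound will follow from Hypothesis~\ref{asum:ic} combined with Hypothesis~\ref{asum:coercivity} and the Lipschitz-type control on $\rho$.

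For existence and uniqueness, the first step is to observe that Hypothesis~\ref{asum:monotonocity} provides the standard one-sided Lipschitz (monotonicity) bound on $\mu$ together with a global Lipschitz estimate on $\rho$, whereas Hypothesis~\ref{asum:continuity} supplies the pointwise continuity in the state variable required to identify the limit of any locally constructed approximation. I would truncate the drift coefficient by composing $\mu(s,\cdot)$ with a smooth cutoff at radius $N$; the truncated coefficients satisfy a global Lipschitz condition by combining continuity on compacts with the monotonicity bound, so the classical Itô existence and uniqueness theorem yields a strong solution $x^{(N)}$ up to a stopping time $\tau_N = \inf\{t : |x^{(N)}_t| \geq N\}$. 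Pathwise uniqueness on $[0, \tau_N]$ follows from applying Itô's formula to $|x^{(N)}_t - \tilde{x}^{(N)}_t|^2$ for two solutions, using Hypothesis~\ref{asum:monotonocity} to absorb both the drift and diffusion contributions, and closing via Gronwall. The non-explosion $\tau_N \nearrow T$ almost surely is exactly what Hypothesis~\ref{asum:coercivity} delivers through the standard Khasminskii-type argument on $1 + |x_t|^2$.

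For the moment bound, I would apply Itô's formula to $(1 + |x_t|^2)^{q/2}$ along the solution stopped at $\tau_N$. The drift contribution is controlled by Hypothesis~\ref{asum:coercivity}, which gives $x \cdot \mu(s,x) \leq L(1 + |x|^2)$, while Hypothesis~\ref{asum:monotonocity} combined with $|\rho(s,0)| \leq L$ yields the quadratic growth bound $|\rho(s,x)|^2 \leq C(1 + |x|^2)$ for the trace term arising from the second-order correction. Grouping all contributions shows that the drift of $(1 + |x_{t \wedge \tau_N}|^2)^{q/2}$ is bounded by $C(1 + |x_{t \wedge \tau_N}|^2)^{q/2}$. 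Taking expectations kills the local martingale part, Gronwall's inequality yields a uniform bound in $N$, and Fatou's lemma together with $\tau_N \nearrow T$ gives
\[
\sup_{t \in [0,T]} \mathbb{E}|x_t|^q \leq C,
\]
with $C$ depending only on $T$, $L$, $q$, and $\mathbb{E}|x_0|^q$.

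The main technical subtlety, and the step I would spend the most care on, is the simultaneous handling of the localization and the moment estimate: one must establish non-explosion before asserting that the Itô expansion is globally meaningful, yet the non-explosion itself is obtained from the same Lyapunov inequality. Consequently, I would carry out the computation on the stopped process $x_{t \wedge \tau_N}$ throughout and pass to the limit $N \to \infty$ only at the end, invoking monotone convergence to transfer the uniform-in-$N$ bound to the unstopped solution. All other steps reduce to standard applications of the Burkholder--Davis--Gundy and Gronwall inequalities, and indeed the proposition is stated in the excerpt as a direct specialization of the general theory in~\cite{Mao2007}, so a fully self-contained proof is not required.
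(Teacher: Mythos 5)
The paper gives no self-contained argument for this proposition: it simply records it as a specialization of the monotone-coefficient well-posedness theory in \cite{Mao2007}. Your overall plan --- appeal to that classical theory, prove uniqueness by It\^o's formula on $|x_t-\tilde x_t|^2$ with Hypothesis~\ref{asum:monotonocity} and Gr\"onwall, obtain non-explosion from Hypothesis~\ref{asum:coercivity}, and derive the $q$-th moment bound by applying It\^o's formula to $(1+|x_t|^2)^{q/2}$ on the stopped process and passing to the limit via Fatou --- is the same route the paper intends, and the uniqueness, non-explosion and moment-bound portions are correct and standard.

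There is, however, one genuine gap in your existence step. You claim that after truncating $\mu$ at radius $N$, ``the truncated coefficients satisfy a global Lipschitz condition by combining continuity on compacts with the monotonicity bound,'' and then invoke the classical It\^o existence theorem. Under the hypotheses actually assumed in the proposition (Hypotheses~\ref{asum:monotonocity}--\ref{asum:continuity}), the drift is only \emph{continuous} in $x$ and \emph{one-sided} Lipschitz; no two-sided local Lipschitz estimate is available, since Hypothesis~\ref{asum:poly_lip_drift}, which would supply one, is not among the assumptions of Proposition~\ref{prop:mb:sde}. A continuous, one-sided Lipschitz function need not be Lipschitz on any compact set --- e.g.\ $\mu(x)=-\operatorname{sgn}(x)\sqrt{|x|}$ in $d=1$ satisfies $(x-y)(\mu(x)-\mu(y))\le 0$ and is continuous but not Lipschitz near the origin --- so the truncated equation does not fall under the classical Lipschitz existence theorem. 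To close this step you must instead use an existence argument adapted to merely continuous, monotone drifts: either construct the solution by Euler/Carath\'eodory approximation and use Hypothesis~\ref{asum:monotonocity} to pass to the limit (this is where Hypothesis~\ref{asum:continuity} is really needed, to identify the limit of $\mu(s,x^{(k)}_s)$), or invoke the Krylov--Rozovskii variational existence theorem directly, or combine a weak-existence result for continuous coefficients with the pathwise uniqueness you already have via Yamada--Watanabe. That is precisely the content of the theorem in \cite{Mao2007} that the paper cites, so the fix is a matter of citing or reproducing the right result rather than of repairing the rest of your argument.
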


\subsection{Randomized-Tamed Milstein Scheme and Main Result}  \label{sec:randomized_scm}
As already discussed, to construct a randomized version of a Milstein-type scheme 
for time-inhomogeneous SDEs with superlinear drift, it is necessary to combine randomization and taming in the drift coefficient. This leads to the \textit{randomized-tamed Milstein scheme}.

To define the scheme, consider a sequence  $\{(\widehat{\mu})^{n}_{\mathrm{tm}}(s,x) : s \in [0,T],\, x \in \mathbb{R}^d\}_{n \in \mathbb{N}}$ of \(\mathscr{B}([0,T]) \otimes \mathscr{B}(\mathbb{R}^d)\)-measurable functions, satisfying Hypotheses~\textnormal{\ref{asum:tame}}, ~\textnormal{\ref{asum:coercivity_scm}} and  \textnormal{\ref{asum:convergence}}, where the subscript ``$\mathrm{tm}$'' indicates a taming modification ensuring stability and convergence rate under superlinear growth. 
Let \(\tau := (\tau_{j-1})_{j\in\mathbb{N}}\) be an i.i.d.\ sequence of \(\mathrm{Unif}[0,1]\) random variables on \((\Omega^{\tau}, \mathscr{G}^{\tau}, \PP^{\tau})\). For a uniform partition \(0=t_0< t_1<\cdots<t_n=T\) with mesh size \(\delta t := t_j-t_{j-1}\), we set \(\mathscr{G}_0^{\tau}:=\{\emptyset,\Omega^\tau\}\) and \(\mathscr{G}_t^{\tau} := \sigma(\tau_0,\dots,\tau_{j-1})\) for \(t\in(t_{j-1},t_j]\), \(j\in\{1,\ldots,n\}\), and write \(\mathcal{G}^\tau := (\mathscr{G}_t^\tau)_{t\in[0,T]}\). The sequence \((\tau_{j-1})_{j\geq 0}\) is assumed independent of the initial condition \(x_0\) and of the Brownian motion \(w\). Expectation with respect to \(\PP^\tau\) is denoted by \(\E^\tau\). We then define the product probability space \((\Omega,\mathscr{G},\PP) := (\tilde{\Omega}\times\Omega^{\tau},\, \tilde{\mathscr{G}}\otimes \mathscr{G}^{\tau},\, \tilde{\PP}\otimes \PP^{\tau})\) with filtration \((\mathscr{G}_t)_{t\in[0,T]}\), where \(\mathscr{G}_t := \tilde{\mathscr{G}}_t \otimes \mathscr{G}_t^\tau\), augmented to satisfy the usual conditions, and denote expectation with respect to \(\PP\) by \(\E\). Finally, for \(s\in[t_{j-1},t_j)\), we set \(\eta_s^n := t_{j-1}\) and \(\tau_s^n := t_{j-1}+\delta t\,\tau_{j-1}\).

Now, the following {randomized-tamed Milstein scheme} is introduced to approximate the solution of SDE~\eqref{eq:sde}
\begin{align}  
	x_t^{n} &= x_{0} + \int_{0}^{t} (\widehat{\mu})_{\mathrm{tm}}^{n}\big(\tau_s^n, x_{\eta_s^n}^{n}\big) \, \mathrm{d}s + \int_{0}^{t} \Gamma\big(\eta_s^n, s, x_{\eta_s^n}^{n}\big) \, \mathrm{d}w_s \label{eq:scm}
\end{align}
almost surely for any $t \in [0,T]$, where $\Gamma$ is a $d \times m$ matrix, whose $(i,j)$-th element is  defined by
\begin{align}
	\Gamma^{ij}\big(\eta_s^n, s, x_{\eta_s^n}^{n}\big) := \rho^{ij}(\eta_s^n, x_{\eta_s^n}^{n}) + \int_{\eta_s^n}^{s} \partial_x \rho^{ij}\big(\eta_r^n, x_{\eta_r^n}^{n}\big) \rho\big(\eta_r^n, x_{\eta_r^n}^{n}\big) \, \mathrm{d}w_r \label{eq:Gam}
\end{align}
for any $s \in [0,T]$,  $i\in\{1,\ldots,d\}$ and $j\in\{1,\ldots,m\}$.

\begin{remark}
	\sloppy
		While the diffusion coefficient in \cite{Kumar2020} may be superlinear, the randomized-tamed Milstein scheme~\eqref{eq:scm} currently covers only the linear diffusion case, with the schemes in \cite{Kumar2020} recovered as special deterministic instances without time randomization.
		Moreover, when both drift and diffusion exhibit linear growth, scheme~\eqref{eq:scm} reduces to the randomized Milstein method without taming, as studied in~\cite{Verena2024}.
\end{remark}

The convergence rate of the scheme~\eqref{eq:scm} for the SDE~\eqref{eq:sde} is established in Theorem~\ref{thm:main_result} below, subject to the following assumptions. To begin, fix an arbitrary constant $\xi > 0$.

\begin{asump} \label{asum:tame}
	There is
	a constant $L>0$, independent of $n\in\mathbb N$, such that 
	\begin{align*}
		|(\widehat{\mu})_{\mathrm{tm}}^{n}(s,x)|&\leq   L \min\big\{n^{\frac{1}{2}}\big(1+|x|\big),|\mu(s,x)|\big\}
	\end{align*}
	for any  $s\in[0,T]$ and $x\in\mathbb R^d$.
\end{asump}

\begin{asump} \label{asum:coercivity_scm}
	There is a constant \( L>0 \), independent of $n\in \mathbb N$, such that
	\begin{align*}
		& x(\widehat{\mu})_{\mathrm{tm}}^{n}(s,x)	 \leq L\big(1+|x|^2)
	\end{align*}
	for any $s\in[0,T]$ and $x\in\mathbb R^d$.
\end{asump}

\begin{asump} \label{asum:poly_lip_drift}
	For any $s\in[0, T]$ and $x, y\in\mathbb R^d$, there is a constant $L>0$,  such that 
	\begin{align*}
		|\mu(s,x)-\mu(s,y)|
		&\leq L \left(1+|x|+|y|\right)^{\xi}|x-y|,
		\\
		|\mu(s,0)|
		&\leq L.
	\end{align*}
\end{asump}

\begin{asump}  \label{asum:holder_time_diffusion_jump}
	For any  $s,t\in[0,T]$ and  $x\in\mathbb R^d$, there are    constants $L>0$ and $\beta\in (0,1]$,   such that
	\begin{align*}
		|{\mu}(s,x)-{\mu}(t,x)|&\leq L|s-t|^{\beta} \big(1+|x|^{\xi+1}\big),
		\\
		|{\rho}(s,x)-{\rho}(t,x)|&\leq L|s-t|^{\beta+\frac{1}{2}} \big(1+|x|\big).
	\end{align*}
\end{asump} 

\begin{asump} \label{asum:convergence}
	For any  $s\in[0,T]$,  there 
	is
	a constant $L>0$,   independent of $n\in\mathbb N$,    such that 
	\begin{align*}
		&\E\big|\mu\big(\tau^n_s,x_{\eta^n_s}^{n}\big) -(\widehat\mu)_{\mathrm{tm}}^{n}\big(\tau^n_s,x_{\eta^n_s}^{n}\big)\big|^{q}
		\leq L n^{-q}.
	\end{align*}
\end{asump}

\begin{asump} \label{asum:lip_first_derv_rho}
	There exists a   constant $L>0$,
	such that 
	\begin{align*}
		|\partial_x\rho^{ij}(s,x)-\partial_x\rho^{ij}(s,y)|
		&\leq L |x-y|
	\end{align*}
	for any $s\in[0,T]$, $x, y\in \mathbb R^d$, $i\in\{1,\ldots,d\}$ and $j\in\{1,\ldots,m\}$.
\end{asump}

\begin{asump} \label{asum:suplin_second_derv_mu}
	For any $s\in[0,T]$ and $x, y\in \mathbb R^d$, there is   a constant $L>0$,
	such that 
	\begin{align*}
		|\partial_x^2\mu(s,x)|
		&\leq L \big(1+|x|^{\xi-1}\big).
	\end{align*}
\end{asump}

The principal contribution of this study is articulated in the following theorem. A complete and detailed proof is presented in Section~\ref{sec:convergence_rate}.
\begin{theorem} \label{thm:main_result}
	If Hypotheses \mbox{\normalfont  \ref{asum:ic}}
	to \mbox{\normalfont  \ref{asum:suplin_second_derv_mu}} hold,
	then 
	\begin{align*}
		\sup_{ t\in[0,T]}\E|x_t-x_t^{n}|^{p}\leq  C n^{-p\min\{\beta+\frac{1}{2},1\}}
	\end{align*}
	provided that $2p(\xi+2)\leq q$, where  $C>0$ is a constant  independent of $n\in\mathbb N$.
\end{theorem}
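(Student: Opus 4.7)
The plan is to combine the tamed Milstein analysis of \cite{Kumar2020} with the drift-randomization technique of \cite{Kruse2019,Verena2024,Pawel2021}, linking the two via the auxiliary process \eqref{eq:auxiliary_equation} alluded to in the introduction. First I would introduce
$$
\bar{x}_t^n := x_0 + \int_0^t \mu(\tau_s^n, x_{\eta_s^n}^n)\,\mathrm{d}s + \int_0^t \Gamma(\eta_s^n, s, x_{\eta_s^n}^n)\,\mathrm{d}w_s,
$$
which keeps both the randomized time-argument in the drift and the Milstein correction in the diffusion but removes the taming modification. The total error splits via the triangle inequality, $|x_t-x_t^n|^p \leq 2^{p-1}(|x_t-\bar{x}_t^n|^p+|\bar{x}_t^n-x_t^n|^p)$, and the second term is a pure taming residual $\int_0^t[\mu-(\widehat\mu)_{\mathrm{tm}}^n](\tau_s^n,x_{\eta_s^n}^n)\,\mathrm{d}s$ which, by H\"older's inequality and Hypothesis \ref{asum:convergence}, is bounded by $Cn^{-p}$ once moment bounds for $x^n_{\eta_s^n}$ have been secured from Hypotheses \ref{asum:tame} and \ref{asum:coercivity_scm}.

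The substantive part is the estimate of $e_t := x_t - \bar{x}_t^n$. I would apply It\^o's formula to $|e_t|^p$ (regularized as $(|e_t|^2+\varepsilon)^{p/2}$ if needed) and pair the drift error with $e_s$ after the decomposition
$$
\mu(s,x_s)-\mu(\tau_s^n,x_{\eta_s^n}^n) = \bigl[\mu(s,x_s)-\mu(s,\bar x_s^n)\bigr] + \bigl[\mu(s,\bar x_s^n)-\mu(\tau_s^n,\bar x_s^n)\bigr] + \bigl[\mu(\tau_s^n,\bar x_s^n)-\mu(\tau_s^n,x_{\eta_s^n}^n)\bigr].
$$
The first bracket is absorbed via the monotonicity in Hypothesis \ref{asum:monotonocity}, yielding an $L|e_s|^2$ contribution usable in Gronwall. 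The delicate middle bracket is where Lemma \ref{lem:estimate_first_term_MR} is invoked: since $\tau_{j-1}\sim \mathrm{Unif}[0,1]$ is independent of $\bar x_s^n$ conditionally on $\mathscr{G}_{\eta_s^n}$, the tower property gives $\mathbb{E}^\tau[\mu(\tau_s^n,\bar x_s^n)\mid \mathscr{G}_{\eta_s^n}] = (\delta t)^{-1}\int_{\eta_s^n}^{\eta_s^n+\delta t}\mu(r,\bar x_s^n)\,\mathrm{d}r$, so the integrand is zero-mean up to a deterministic averaging error of size $n^{-(\beta+1/2)}$ via Hypothesis \ref{asum:holder_time_diffusion_jump}; the classical martingale/variance gain provides the extra half-order. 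The third bracket is handled by the polynomial Lipschitz bound of Hypothesis \ref{asum:poly_lip_drift} combined with the one-step estimate $\mathbb{E}|\bar x_s^n-x_{\eta_s^n}^n|^p\leq Cn^{-p}$, where the moment constraint $2p(\xi+2)\leq q$ is exactly what allows the superlinear factor $(1+|x|+|y|)^\xi$ to be absorbed through H\"older.

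For the diffusion contribution, I would rewrite
$$
\rho(s,x_s)-\Gamma(\eta_s^n,s,x_{\eta_s^n}^n) = \bigl[\rho(s,x_s)-\rho(\eta_s^n,x_{\eta_s^n}^n)\bigr]-\int_{\eta_s^n}^s \partial_x\rho(\eta_r^n,x_{\eta_r^n}^n)\rho(\eta_r^n,x_{\eta_r^n}^n)\,\mathrm{d}w_r
$$
and expand the first bracket by It\^o's formula applied to $r\mapsto \rho(r,x_r)$; the Milstein term cancels the leading $O((\delta t)^{1/2})$ stochastic-integral contribution, and what remains is controlled by Hypotheses \ref{asum:holder_time_diffusion_jump}, \ref{asum:lip_first_derv_rho} and \ref{asum:suplin_second_derv_mu} to order $n^{-\min\{\beta+1/2,1\}}$. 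This is the statement of Lemma \ref{lem:estimate_last_term_MR}. Combining all contributions, using Burkholder--Davis--Gundy on the martingale parts, and adding the taming bound produces an inequality of the form $\mathbb{E}\sup_{r\leq t}|e_r|^p \leq C\int_0^t \mathbb{E}|e_s|^p\,\mathrm{d}s + Cn^{-p\min\{\beta+1/2,1\}}$, after which Gronwall's lemma yields the claim. The hardest point, and where the novelty sits, is reconciling the conditional-expectation trick behind randomization with the superlinear state dependence: the randomization gain is available only for the drift evaluated at the frozen auxiliary process $\bar x_s^n$, so Hypothesis \ref{asum:poly_lip_drift} must be deployed to pass between $x_{\eta_s^n}^n$ and $\bar x_s^n$ without eroding the improved exponent $\beta+\tfrac12$.
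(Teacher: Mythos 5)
Your high-level architecture (auxiliary process, randomization gain on the drift, taming residual via Hypothesis~\ref{asum:convergence}, Gr\"onwall) matches the paper's philosophy, but your choice of auxiliary process is different from the paper's and it creates two genuine gaps.

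First, the randomization step. You take $\bar{x}^n$ with drift $\mu(\tau_s^n, x_{\eta_s^n}^n)$ and claim the middle bracket $\mu(s,\bar x_s^n)-\mu(\tau_s^n,\bar x_s^n)$ is conditionally centered because ``$\tau_{j-1}$ is independent of $\bar x_s^n$ conditionally on $\mathscr{G}_{\eta_s^n}$.'' That is false: for $s\in(t_{j-1},t_j]$ the process $\bar x_s^n$ contains the drift increment $\mu(\tau_s^n,x_{t_{j-1}}^n)(s-t_{j-1})$, which depends on $\tau_{j-1}$. Worse, even if the centering held, you are pairing the bracket with the weight $|e_s|^{p-2}e_s$ inside It\^o's formula, and that weight also depends on $\tau_{j-1}$ and on the Brownian path up to $s$, so the conditional expectation does not annihilate the product and the variance gain of order $n^{-1/2}$ is not available. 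The paper sidesteps both problems by defining the auxiliary process $z^n$ in \eqref{eq:auxiliary_equation} with drift $\mu(\tau_s^n, x_{\tau_s^n})$ evaluated along the \emph{true} solution (which lives on $\tilde\Omega$ and is $\tau$-independent) and with diffusion $\rho(s,x_s)$: then $x_t-z_t^n$ is a weight-free Riemann integral whose subinterval increments are exactly conditionally centered given $\tilde{\mathscr{G}}$, so the maximal inequality of \cite[Theorem 4.1]{Kruse2019} applies directly (Lemma~\ref{lem:estimate_first_term_MR}). The price is that the randomized state argument $x_{\tau_s^n}$ then reappears in the second error $z^n-x^n$, and there the paper must still perform a decoupling: in Lemma~\ref{lem:mu} the weight is split as its value at $\eta_s^n$ (predictable, so $\E(\mathcal M\mid\mathscr G_{\eta_s^n})=0$ kills the martingale part) plus an increment, estimated in \eqref{eq:martingale_related_term}--\eqref{eq:final_martingale_estimate}. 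Your sketch contains no analogue of this decoupling.

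Second, your third bracket $\mu(\tau_s^n,\bar x_s^n)-\mu(\tau_s^n,x_{\eta_s^n}^n)$ cannot be closed by Hypothesis~\ref{asum:poly_lip_drift} plus a one-step estimate. The one-step increment of $\bar x^n$ contains the stochastic integral of $\Gamma$, so $\E|\bar x_s^n-x_{\eta_s^n}^n|^p\le Cn^{-p/2}$, not $Cn^{-p}$ as you assert; after Young's inequality this contributes $Cn^{-p/2}$ to the right-hand side, which is strictly worse than the target $n^{-p\min\{\beta+\frac12,1\}}$ for every $\beta>0$. Recovering order one from such increments requires an It\^o--Taylor expansion of $x\mapsto\mu(\tau_s^n,x)$ along the scheme, so that the bounded-variation part $\mathcal A$ is $O(n^{-p})$ and the remaining martingale part $\mathcal M$ (which is only $O(n^{-p/2})$) is removed by the conditioning argument just described; this is precisely the content of \eqref{eq:A_M}--\eqref{eq:final_martingale_estimate} in the paper, under the extra Hypothesis~\ref{asum:suplin_second_derv_mu} on $\partial_x^2\mu$, which your argument never invokes. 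Your treatment of the diffusion/Milstein term and of the taming residual is essentially correct and consistent with Lemmas~\ref{lem:rho} and~\ref{lem:conv_tame_drift}.
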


\begin{remark}
	Theorem~\mbox{\normalfont  \ref{thm:main_result}} establishes that, for \(\beta \in [0, 1]\), the randomized-tamed Milstein scheme~\eqref{eq:scm} achieves an \(\mathscr{L}^p\)-convergence rate of up to \(\min\{\beta + \tfrac{1}{2}, 1\}\), which surpasses the rate \(\beta\) attained by the classical tamed Milstein scheme.  Furthermore, under appropriate regularity assumptions on the coefficients--specifically, when \(\beta \in [\tfrac{1}{2}, 1]\)-the randomized-tamed Milstein scheme~\eqref{eq:scm} attains the optimal \(\mathscr{L}^p\)-convergence rate of order 1. This theoretical advantage is also confirmed by the numerical experiments presented in Section~\mbox{\normalfont  \ref{sec:numerics}}.	
\end{remark}

Following \cite{Kumar2020}, a concrete form of the taming procedure can be described as follows.

\subsection{An Illustrative Example of Drift Taming}

A specific instance of the general framework in Equation~\eqref{eq:scm} is provided by the randomized-tamed Milstein scheme for the SDE~\eqref{eq:sde}, in which the drift coefficient is modified through the taming
\begin{equation} \label{eq:taming_drift}
	(\widehat{\mu})_{\mathrm{tm}}^{n}(s,x) := \frac{\mu(s,x)}{1 + n^{-1} |x|^{2\xi}}, \quad s \in [0,T], \; x \in \mathbb{R}^d.
\end{equation}
This formulation limits the growth of the drift for large values of $|x|$, thereby ensuring that explicit numerical schemes remain applicable in the presence of superlinear coefficients. The effectiveness of this taming procedure is clearly demonstrated in Section \ref{sec:numerics} through the numerical experiments for the FitzHugh--Nagumo system~\eqref{eq:sde-system}.

\subsection{Moment bound of the randomized-tamed Milstein scheme \eqref{eq:scm}} 
The $\mathscr{L}^q$-moment bound for the randomized-tamed scheme~\eqref{eq:scm} is established in this section (see Lemma~\ref{lem:scm_mb}).

The following remark highlights essential consequence of the preceding assumptions.
These will be instrumental in proving the moment bound and subsequent results.

\begin{remark} \label{rem:super_linear}
	Under Hypotheses~\textnormal{\ref{asum:monotonocity}}, \textnormal{\ref{asum:lip_first_derv_rho}} and \textnormal{\ref{asum:poly_lip_drift}}, there is a constant \( C > 0 \), such that
	\begin{align*}
		|\mu(s,x)| \leq C \big(1 + |x|^{\xi + 1}\big), \quad
		|\partial_x &\mu(s,x)| \leq C \big(1 + |x|^{\xi}\big), \quad
		|\rho(s,x)| \leq C \big(1 + |x|\big), \\
		|\partial_x \rho^{ij}(s,x)| &\leq C, \quad
		|\partial_x^2 \rho^{ij}(s,x)| \leq C
	\end{align*}
	for any \( s\in [0,T] \), \( x \in \mathbb{R}^d \), $i\in\{1,\ldots,d\}$ and $j\in\{1,\ldots,m\}$.
\end{remark}

\begin{lemma} \label{lem:one_step_error}
	If Hypotheses  \mbox{\normalfont  \ref{asum:monotonocity}} and \mbox{\normalfont  \ref{asum:tame}}  are satisfied, then 
	\begin{align*}
		\E|x_s^{n}-x_{\eta^n_s}^{n}|^{q}\leq  C
		n^{-\frac{q}{2}}\big(1+\E|x_{\eta^n_s}^{n}|^{q}\big)
	\end{align*}
	for any ${s\in[0,T]}$, where  $C>0$ is a constant  independent of  $n\in\mathbb N$.
\end{lemma}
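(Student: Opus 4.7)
The plan is to start from the defining identity
\[
x_s^{n}-x_{\eta_s^n}^{n} \;=\; \int_{\eta_s^n}^{s}(\widehat\mu)^n_{\mathrm{tm}}\bigl(\tau_r^n,x_{\eta_r^n}^{n}\bigr)\,\mathrm{d}r \;+\; \int_{\eta_s^n}^{s}\Gamma\bigl(\eta_r^n,r,x_{\eta_r^n}^{n}\bigr)\,\mathrm{d}w_r,
\]
split the $\mathscr{L}^q$-norm into the drift and diffusion contributions via the $C_r$-inequality, and estimate each separately using Hölder's inequality and the Burkholder--Davis--Gundy (BDG) inequality. Throughout the argument, one exploits the crucial fact that on $[\eta_s^n,s]$ the index $\eta_r^n$ is frozen at $\eta_s^n$, so $x_{\eta_r^n}^{n}=x_{\eta_s^n}^{n}$, and the length of the integration interval satisfies $s-\eta_s^n\le T/n$.

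For the drift term, the key ingredient is the first half of Hypothesis~\ref{asum:tame}, namely $|(\widehat\mu)^n_{\mathrm{tm}}(r,x)|\le L n^{1/2}(1+|x|)$. A direct application of Hölder's inequality in the time variable yields
\[
\E\Bigl|\int_{\eta_s^n}^{s}(\widehat\mu)^n_{\mathrm{tm}}(\tau_r^n,x_{\eta_r^n}^{n})\,\mathrm{d}r\Bigr|^{q}
\;\le\; (s-\eta_s^n)^{q-1}\!\int_{\eta_s^n}^{s}\E|(\widehat\mu)^n_{\mathrm{tm}}|^{q}\,\mathrm{d}r
\;\le\; C (T/n)^{q}\,n^{q/2}\,(1+\E|x_{\eta_s^n}^{n}|^{q}),
\]
which collapses to the desired $C n^{-q/2}(1+\E|x_{\eta_s^n}^{n}|^{q})$. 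The appearance of the factor $n^{q/2}$ from the tamed drift is exactly what is absorbed by the extra half-power of the step size, and no monotonicity or coercivity is needed at this point.

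The diffusion contribution is more delicate because $\Gamma$ is itself a stochastic process. First I would derive a uniform $\mathscr{L}^q$-bound for $\Gamma$: by the $C_r$-inequality followed by BDG applied to the inner Milstein correction in \eqref{eq:Gam}, together with Remark~\ref{rem:super_linear} (which gives $|\rho(r,x)|\le C(1+|x|)$ and $|\partial_x\rho^{ij}|\le C$ via Hypotheses~\ref{asum:monotonocity} and~\ref{asum:lip_first_derv_rho}), one obtains
\[
\E|\Gamma(\eta_s^n,r,x_{\eta_s^n}^{n})|^{q}\;\le\; C\bigl(1+\E|x_{\eta_s^n}^{n}|^{q}\bigr)
\]
uniformly for $r\in[\eta_s^n,s]$; the stochastic-integral part even contributes an extra $n^{-q/2}$, so it is negligible. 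Then one applies BDG and Hölder to the outer integral,
\[
\E\Bigl|\int_{\eta_s^n}^{s}\Gamma\,\mathrm{d}w_r\Bigr|^{q}
\;\le\; C\,\E\Bigl(\int_{\eta_s^n}^{s}|\Gamma|^{2}\,\mathrm{d}r\Bigr)^{q/2}
\;\le\; C(s-\eta_s^n)^{q/2-1}\!\int_{\eta_s^n}^{s}\E|\Gamma|^{q}\,\mathrm{d}r,
\]
which gives $C n^{-q/2}(1+\E|x_{\eta_s^n}^{n}|^{q})$, matching the drift bound.

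The main obstacle I anticipate is the bookkeeping in the nested BDG estimate for $\Gamma$: one has to apply BDG twice (once to the inner Milstein correction and once to the outer diffusion integral), track the $q/2$-powers carefully, and ensure that the $L$-adapted nature of $\Gamma$ on $[\eta_s^n,s]$ is used correctly when invoking BDG. Once both estimates are in hand, a final application of the triangle inequality combines them to yield the claim with a constant $C$ depending only on $T$, $L$, and $q$, and independent of $n$.
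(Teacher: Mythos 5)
Your proposal is correct and follows essentially the same route as the paper: bound $\E|\Gamma|^{q}$ first via the martingale (BDG) and H\"older inequalities together with Remark~\ref{rem:super_linear}, then split the increment into drift and diffusion parts, absorbing the $n^{q/2}$ from Hypothesis~\ref{asum:tame} into the $(T/n)^{q}$ factor from H\"older. The only cosmetic difference is that the boundedness of $\partial_x\rho^{ij}$ already follows from the Lipschitz bound in Hypothesis~\ref{asum:monotonocity}, so no appeal to Hypothesis~\ref{asum:lip_first_derv_rho} is needed here.
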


\begin{proof} 
	Firstly,  estimate the following term by recalling \eqref{eq:Gam} and utilizing martingale and H\"older's  inequalities together with Remark \ref{rem:super_linear} to obtain 
	\begin{align} \label{eq:estimate_gamma}
		\E\big|  \Gamma^{ij}(\eta^n_s,s,x_{\eta^n_s}^{n})\big|^{q}&\leq C\E\big|  \rho^{ij}(\eta^n_s,x_{\eta^n_s}^{n})\big|^{q}+	C\E\Big| \int_{\eta^n_s}^{s} \partial_x\rho^{ij}(\eta^n_r,x_{\eta^n_r}^{n})\rho(\eta^n_r,x_{\eta^n_r}^{n})\,\mathrm{d}w_r\Big|^{q}  \notag
		\\
		&\leq\, \!C\big(1+\E|x_{\eta^n_s}^{n}|^{q}\big)\!+\!Cn^{-\frac{q}{2}+1}\E \int_{\eta^n_s}^{s}\big| \partial_x\rho^{ij}(\eta^n_r,x_{\eta^n_r}^{n})\rho(\eta^n_r,x_{\eta^n_r}^{n})\big|^{q}\,\mathrm{d}r \notag
		\\
		&\leq\, C\big(1+\E|x_{\eta^n_s}^{n}|^{q}\big)
	\end{align}
	for any $s\in[0,T]$, $i\in\{1,\ldots,d\}$ and $j\in\{1,\ldots,m\}$. Then, recall   \eqref{eq:scm}, and apply H\"older's and martingale  inequalities  to get
	\begin{align*} 
		\E|x_s^{n}-&x_{\eta^n_s}^{n}|^{q}			
		\leq  Cn^{-{q}+1}\E\int_{\eta^n_s}^s \big|(\widehat\mu)_{\mathrm{tm}}^{n}\big(\tau^n_r,x_{\eta^n_r}^{n}\big) \big|^{q}\, \mathrm{d}r 
		\\
		&\qquad\qquad\quad +Cn^{-\frac{q}{2}+1}\E\int_{\eta^n_s}^{s}\big|  \Gamma(\eta^n_r,r,x_{\eta^n_r}^{n})\big|^{q}  \, \mathrm{d}r 
	\end{align*}
	for any $s\in[0,T]$, which on  using Assumption  \ref{asum:tame} and  \eqref{eq:estimate_gamma} completes the proof. 
\end{proof}

In the subsequent lemma, one observes the moment bound of the  scheme \eqref{eq:scm}. 
\begin{lemma} \label{lem:scm_mb}
	If Hypotheses  \mbox{\normalfont  \ref{asum:ic}}, \mbox{\normalfont  \ref{asum:monotonocity}},      \mbox{\normalfont  \ref{asum:tame}} and {\mbox{\normalfont  \ref{asum:coercivity_scm}}} hold, then 
	\begin{align*}
		\sup_{ t\in[0,T]}\E|x_t^{n}|^{q}\leq  C
	\end{align*}
	where $C>0$ is a constant  independent  of $n\in\mathbb N$.
\end{lemma}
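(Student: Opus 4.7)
The plan is to apply It\^o's formula to the smooth, polynomially growing function $V(x):=(1+|x|^2)^{q/2}$, which majorizes $|x|^q$ while avoiding the singularity of $|x|^q$ at the origin for non-even integer $q$. After a standard localization by the stopping times $\tau_R := \inf\{t\in[0,T] : |x_t^n| \geq R\}$ (which turns the stochastic integrals into true martingales of zero mean), one obtains
\begin{align*}
\E V(x_{t\wedge \tau_R}^n) = \E V(x_0) + \E\int_0^{t\wedge \tau_R}\Big[& q(1+|x_s^n|^2)^{\frac{q}{2}-1} x_s^n\cdot(\widehat{\mu})_{\mathrm{tm}}^n(\tau_s^n,x_{\eta_s^n}^n) \\
&+ \tfrac{1}{2}\tr\big(\Gamma^{\ast}\, \partial_x^2 V(x_s^n)\,\Gamma\big)\Big]\mathrm{d}s,
\end{align*}
where $\Gamma$ abbreviates $\Gamma(\eta_s^n,s,x_{\eta_s^n}^n)$. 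The remaining task reduces to bounding the drift and diffusion integrands on average by $C[1+V(x_s^n)+V(x_{\eta_s^n}^n)]$, after which Gronwall's inequality closes the argument.

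For the drift, the idea is to split $x_s^n = x_{\eta_s^n}^n + (x_s^n - x_{\eta_s^n}^n)$. The part pairing $x_{\eta_s^n}^n$ with $(\widehat{\mu})_{\mathrm{tm}}^n(\tau_s^n, x_{\eta_s^n}^n)$ is controlled by $L(1+|x_{\eta_s^n}^n|^2)$ via the coercivity Assumption \ref{asum:coercivity_scm}; multiplying by $q(1+|x_s^n|^2)^{q/2-1}$ and applying Young's inequality with exponents $(q/(q-2), q/2)$ produces a clean sum $C(1+|x_s^n|^2)^{q/2} + C(1+|x_{\eta_s^n}^n|^2)^{q/2}$. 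The remaining part, bounded through the tame estimate $|(\widehat{\mu})_{\mathrm{tm}}^n|\leq Ln^{1/2}(1+|x_{\eta_s^n}^n|)$ of Assumption \ref{asum:tame}, yields a term of the form $qLn^{1/2}(1+|x_s^n|^2)^{q/2-1}(1+|x_{\eta_s^n}^n|)|x_s^n-x_{\eta_s^n}^n|$; a three-factor Young's inequality with exponents $(q/(q-2), q, q)$ converts this into $C(1+|x_s^n|^2)^{q/2} + C(1+|x_{\eta_s^n}^n|^2)^{q/2} + Cn^{q/2}|x_s^n-x_{\eta_s^n}^n|^q$. Crucially, Lemma \ref{lem:one_step_error} gives $\E[n^{q/2}|x_s^n-x_{\eta_s^n}^n|^q] \leq C(1+\E|x_{\eta_s^n}^n|^q)$, so the artificial $n^{1/2}$ amplification built into the tame bound is precisely absorbed.

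For the diffusion, the key observation is that the operator norm of $\partial_x^2 V(x)$ is dominated by $C(1+|x|^2)^{q/2-1}$, giving $\tfrac{1}{2}\tr(\Gamma^{\ast}\partial_x^2 V\,\Gamma) \leq C(1+|x_s^n|^2)^{q/2-1}|\Gamma|^2$. A further Young's inequality with exponents $(q/(q-2), q/2)$ splits this as $C(1+|x_s^n|^2)^{q/2} + C|\Gamma|^q$, and the estimate \eqref{eq:estimate_gamma} already derived inside the proof of Lemma \ref{lem:one_step_error} supplies $\E|\Gamma(\eta_s^n,s,x_{\eta_s^n}^n)|^q \leq C(1+\E|x_{\eta_s^n}^n|^q)$. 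Collecting everything and taking the supremum over $u\in[0,t]$ yields
\[
\sup_{u\in[0,t]}\E V(x_{u\wedge\tau_R}^n) \leq C + C\int_0^t \sup_{u\in[0,s]}\E V(x_{u\wedge \tau_R}^n)\,\mathrm{d}s,
\]
so Gronwall's inequality delivers an $R$-independent bound, and Fatou's lemma as $R\to\infty$ concludes the proof.

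The main analytical obstacle is precisely the mismatch between the time-$s$ factor $(1+|x_s^n|^2)^{q/2-1}$ arising from $\partial_x V(x_s^n)$ and the grid-evaluated $(\widehat{\mu})_{\mathrm{tm}}^n(\tau_s^n, x_{\eta_s^n}^n)$: the coercivity Assumption \ref{asum:coercivity_scm} provides a useful cancellation only when the multiplier and the drift argument coincide, and bridging the gap without destroying the moment bound is the role of the three-factor Young's inequality above, in which the $n^{1/2}$ penalty inherited from the tame bound of Assumption \ref{asum:tame} is exactly counterbalanced by the $n^{-1/2}$ one-step regularity granted by Lemma \ref{lem:one_step_error}.
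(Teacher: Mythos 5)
Your proposal is correct and follows essentially the same route as the paper: It\^o's formula applied to a $q$-th power Lyapunov function, the decomposition $x_s^n = x_{\eta_s^n}^n + (x_s^n - x_{\eta_s^n}^n)$ so that Hypothesis~\ref{asum:coercivity_scm} handles the grid-aligned part while the tame bound of Hypothesis~\ref{asum:tame} combined with Lemma~\ref{lem:one_step_error} absorbs the $n^{1/2}$ amplification in the increment part, the estimate \eqref{eq:estimate_gamma} for the diffusion term, and Gr\"onwall to close. Your use of $(1+|x|^2)^{q/2}$, the stopping-time localization with Fatou, and the three-factor Young's inequality are minor technical refinements of the paper's Young-then-H\"older treatment of the cross term, not a genuinely different argument.
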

\begin{proof}
	Recall the  scheme   \eqref{eq:scm}  and  employ   It\^o's formula (see \cite[Theorem 94]{Situ2006}) to obtain
	\begin{align*} 
		|x_t^{n}|^{q}  
		&\leq
		|x_0|^{q}
		+{q}\int_{0}^{t}|x_s^{n}|^{{q}-2}x_s^{n}\big((\widehat\mu)_{\mathrm{tm}}^{n}\big(\tau^n_s,x_{\eta^n_s}^{n}\big)\big) 
		\,\mathrm{d}s \notag
		\\
		&\quad +{q}\int_{0}^{t}|x_s^{n}|^{{q}-2}x_s^{n}   \,\Gamma(\eta^n_s,s,x_{\eta^n_s}^{n})   \, \mathrm{d}w_s \notag
		\\
		&\quad +\tfrac{q(q-1)}{2}\int_{0}^{t}|x_s^{n}|^{{q}-2}\big|   \Gamma(\eta^n_s,s,x_{\eta^n_s}^{n}) \big|^2 \,\mathrm{d}s \notag
	\end{align*}
	almost surely for any $t\in[ 0, T]$, which on  taking expectation, and utilizing  Young's inequality,  Hypothesis   \ref{asum:coercivity_scm} and the estimate in \eqref{eq:estimate_gamma} yields  
	\begin{align}  
		\E|x_t^{n}|^{q}  &\leq  \E|x_0|^{q}+{q}\E\int_{0}^{t} |x_{s}^{n}|^{{q}-2} x_{\eta^n_s}^{n}(\widehat\mu)_{\mathrm{tm}}^{n}\big(\tau^n_s,x_{\eta^n_s}^{n}\big) \,\mathrm{d}s \notag
		\\
		&\qquad\qquad\qquad+C\E\int_{0}^{t}\Big(|x_{s}^{n}|^{{q}}+\big|  \Gamma(\eta^n_s,s,x_{\eta^n_s}^{n})|^q\Big)\,\mathrm{d}s
		\notag
		\\
		&\quad+ q\E\int_{0}^{t}|x_{s}^{n}|^{{q}-2}\big(x_s^{n}-x_{\eta^n_s}^{n}\big)(\widehat\mu)_{\mathrm{tm}}^{n}\big(\tau^n_s,x_{\eta^n_s}^{n}\big)  \,\mathrm{d}s \notag
		\\
		&  \leq  \E|x_0|^{q}+C+C\int_0^t\sup_{ r\in[0,s]}\E|x_r^{n}|^{q}\,\mathrm{d}s \notag
		\\
		&\quad+ C\E\int_{0}^{t}\big|x_s^{n}-x_{\eta^n_s}^{n}\big|^{{q}/2}\big|(\widehat\mu)_{\mathrm{tm}}^{n}(\tau^n_s,x_{\eta^n_s}^{n})\big|^{{q}/2} \,\mathrm{d}s \notag
	\end{align}
	for any $t\in[ 0, T].$  Further, one uses 	H\"older's inequality along with Hypotheses \ref{asum:ic}, \ref{asum:tame}, and Lemma \ref{lem:one_step_error} to obtain  the following
	\begin{align*}
		\E|x_t^n|^{q}
		&  \leq  \E|x_0|^{q}+C+C\int_0^t\sup_{ r\in[0,s]}\E|x_r^{n}|^{q}\,\mathrm{d}s \notag
		\\
		&\quad+ C\int_{0}^{t}\Big(\E|x_s^{n}-x_{\eta^n_s}^{n}\big|^{{q}}\Big)^{1/2}\Big(\E|(\widehat\mu)_{\mathrm{tm}}^{n}(\tau^n_s,x_{\eta^n_s}^{n})\big|^{{q}}\Big)^{1/2}  \,\mathrm{d}s \notag
		\\
		&  \leq  C+C\int_0^t\sup_{ r\in[0,s]}\E|x_r^{n}|^{q}\,\mathrm{d}s  \notag
	\end{align*}
	which on utilizing  Gr\"onwall's inequality  completes the proof.
\end{proof}

\section{Proof of the Main  Result} \label{sec:convergence_rate}
The proof of the main result stated in Theorem~\ref{thm:main_result} is now presented. As a preliminary step, several intermediate results are established to serve as building blocks in the argument.

The following corollaries are direct consequences of Proposition~\ref{prop:mb:sde} and Lemmas  \ref{lem:one_step_error}, \ref{lem:scm_mb}.
\begin{cor} \label{cor:_coefficients_mb}
	Suppose that Hypotheses~\mbox{\normalfont\ref{asum:ic}}  to \mbox{\normalfont\ref{asum:poly_lip_drift}}and {\mbox{\normalfont\ref{asum:suplin_second_derv_mu}}} hold. Then, for any \( p \geq 2 \) satisfying \( p(\xi + 1) \leq q \), there exists a constant \( C > 0 \), independent of \( n \in \mathbb{N} \), such that
	\begin{align*}
		\sup_{s \in [0,T]} \mathbb{E} \left( |\mu(s, x_s)|^p +   |\rho(s, x_s)|^p+   |\mu(s, x_s^n)|^p +  |\rho(s, x_s^n)|^p\right) &\leq C, 
		\\
		\sup_{s \in [0,T]} \mathbb{E} \left(|\partial_x \mu(s,x_{s})|^{p}+|\partial_x^2 \mu(s,x_{s})|^{p}+|\partial_x \mu(s,x_{s}^n)|^{p}+|\partial_x^2 \mu(s,x_{s}^n)|^{p}\right)&\leq C, 
		\\
		\sup_{s \in [0,T]} \mathbb{E} \left(|(\widehat\mu)_{\mathrm{tm}}^{n}(s, x_s^n)|^p  + |\Gamma(\eta_s^n, s, x_s^n)|^p\right)  &\leq C.
	\end{align*}
\end{cor}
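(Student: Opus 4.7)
The plan is to derive each moment bound by a direct application of the polynomial growth estimates from Remark~\ref{rem:super_linear} (together with Hypothesis~\ref{asum:suplin_second_derv_mu} for the Hessian of $\mu$), combined with the $\mathscr{L}^q$-moment controls from Proposition~\ref{prop:mb:sde} for $x_s$ and from Lemma~\ref{lem:scm_mb} for $x_s^n$. The working condition $p(\xi+1)\leq q$ ensures that every exponent produced by raising a coefficient bound to the $p$-th power stays below the available order of integrability.

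First I would handle the bounds on $\mu(s,x_s)$, $\rho(s,x_s)$, $\mu(s,x_s^n)$, and $\rho(s,x_s^n)$. Remark~\ref{rem:super_linear} yields $|\mu(s,x)|^p\leq C\bigl(1+|x|^{p(\xi+1)}\bigr)$ and $|\rho(s,x)|^p\leq C\bigl(1+|x|^p\bigr)$. Since $p\leq p(\xi+1)\leq q$, taking expectation and invoking Proposition~\ref{prop:mb:sde} and Lemma~\ref{lem:scm_mb} concludes this part. The same line of reasoning, using $|\partial_x\mu(s,x)|\leq C(1+|x|^{\xi})$ from Remark~\ref{rem:super_linear} and $|\partial_x^2\mu(s,x)|\leq C(1+|x|^{\xi-1})$ from Hypothesis~\ref{asum:suplin_second_derv_mu}, delivers the bounds on the first and second derivatives of $\mu$ evaluated at $x_s$ and at $x_s^n$.

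For the tamed drift, Hypothesis~\ref{asum:tame} gives $|(\widehat{\mu})_{\mathrm{tm}}^{n}(s,x_s^n)|\leq L|\mu(s,x_s^n)|$, which immediately reduces the claim to the already established bound on $\E|\mu(s,x_s^n)|^p$. For $\Gamma$, I would repeat the estimate carried out inside the proof of Lemma~\ref{lem:one_step_error}: split $\Gamma^{ij}(\eta_s^n,s,x_{\eta_s^n}^n)$ into the leading $\rho^{ij}$-term and an It\^o integral over $[\eta_s^n,s]$ whose integrand is controlled by $C(1+|x_{\eta_r^n}^n|)$ thanks to the boundedness of $\partial_x\rho^{ij}$ and the linear growth of $\rho$ from Remark~\ref{rem:super_linear}. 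The Burkholder--Davis--Gundy inequality, together with the $\mathscr{L}^q$-bound on $x_{\eta_r^n}^n$ from Lemma~\ref{lem:scm_mb}, then yields $\E|\Gamma(\eta_s^n,s,x_{\eta_s^n}^n)|^p\leq C$ uniformly in $s\in[0,T]$ and $n\in\mathbb{N}$.

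The principal subtlety---really the only one---is exponent bookkeeping: one must verify that each of $p$, $p\xi$, $p(\xi-1)$, and $p(\xi+1)$ is dominated by $q$, which follows automatically from the standing assumption $p(\xi+1)\leq q$. Everything else is a direct assembly of the hypotheses and the already proved moment bounds, so no genuinely new estimate is required.
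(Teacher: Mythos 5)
Your proposal is correct and follows exactly the route the paper intends: the paper gives no written proof, merely declaring the corollary a direct consequence of Proposition~\ref{prop:mb:sde} and Lemmas~\ref{lem:one_step_error}, \ref{lem:scm_mb}, and your argument (growth bounds from Remark~\ref{rem:super_linear} and Hypotheses~\ref{asum:tame}, \ref{asum:suplin_second_derv_mu}, moment bounds from Proposition~\ref{prop:mb:sde} and Lemma~\ref{lem:scm_mb}, and the estimate \eqref{eq:estimate_gamma} for $\Gamma$) is precisely the omitted verification. The exponent bookkeeping under $p(\xi+1)\leq q$ is handled correctly.
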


\begin{cor} \label{cor:one_step_error}
	Assume that Hypotheses~{\mbox{\normalfont  \ref{asum:ic}}, \mbox{\normalfont  \ref{asum:monotonocity}},      \mbox{\normalfont  \ref{asum:tame}} and \mbox{\normalfont  \ref{asum:coercivity_scm}}}  are satisfied. Then, for any \( s \in [0, T] \)
	\begin{align*}
		\mathbb{E}|x_s^{n} - x_{\tau^n_s}^{n}|^{p} \leq C n^{-\frac{p}{2}}, \quad \mbox{and } \quad	\mathbb{E}|x_s^{n} - x_{\eta^n_s}^{n}|^{p} \leq C n^{-\frac{p}{2}}
	\end{align*}
	provided that \( p(\xi + 1) \leq q \), where \( C > 0 \) is a constant independent of \( n \in \mathbb{N} \).
\end{cor}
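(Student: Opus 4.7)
The plan is to obtain both inequalities as straightforward extensions of Lemma \ref{lem:one_step_error}, combined with the uniform moment bound from Lemma \ref{lem:scm_mb}. Since the assumption $p(\xi+1) \leq q$ in particular gives $p \leq q$, Jensen's inequality yields $\sup_{t\in[0,T]}\E|x_t^n|^p \leq C$, so every occurrence of the exponent $q$ in the proof of Lemma \ref{lem:one_step_error} may be replaced by $p$ without altering any constant.

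For the second estimate, I would simply rerun the proof of Lemma \ref{lem:one_step_error} verbatim with $p$ in place of $q$. The two building blocks used there -- the taming bound $|(\widehat{\mu})_{\mathrm{tm}}^n(s,x)| \leq Ln^{1/2}(1+|x|)$ from Assumption \ref{asum:tame} and the estimates on $\rho$ and $\partial_x\rho$ from Remark \ref{rem:super_linear} -- hold at any exponent. The factors $n^{-p+1}\cdot n^{-1}\cdot n^{p/2}$ produced by the drift integral and $n^{-p/2+1}\cdot n^{-1}$ produced by the stochastic integral both collapse to $n^{-p/2}$ after incorporating the $\mathscr{L}^p$-moment bound, giving $\E|x_s^n - x_{\eta^n_s}^n|^p \leq Cn^{-p/2}$ directly.

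For the first estimate, I would split
$|x_s^n - x_{\tau^n_s}^n|^p \leq 2^{p-1}\bigl(|x_s^n - x_{\eta^n_s}^n|^p + |x_{\eta^n_s}^n - x_{\tau^n_s}^n|^p\bigr)$,
the first piece being handled by the previous step. For the second piece, observe that $\eta^n_r \equiv \eta^n_s$ and $\tau^n_r \equiv \tau^n_s$ for every $r$ in the subinterval $[\eta^n_s,\eta^n_s+\delta t)$ containing $s$, so \eqref{eq:scm} yields
\begin{align*}
x_{\tau^n_s}^n - x_{\eta^n_s}^n
= (\tau^n_s - \eta^n_s)\,(\widehat{\mu})_{\mathrm{tm}}^n(\tau^n_s,x_{\eta^n_s}^n)
+ \int_{\eta^n_s}^{\tau^n_s} \Gamma(\eta^n_s, r, x_{\eta^n_s}^n)\,\mathrm{d}w_r.
\end{align*}
The drift contribution is bounded in $\mathscr{L}^p$ by $Cn^{-p/2}$ using $\tau^n_s - \eta^n_s \leq T/n$ together with Assumption \ref{asum:tame}; for the stochastic term I dominate by $\sup_{u \in [\eta^n_s,\eta^n_s+\delta t]}\bigl|\int_{\eta^n_s}^u \Gamma(\eta^n_s,r,x_{\eta^n_s}^n)\,\mathrm{d}w_r\bigr|$ and apply the Burkholder--Davis--Gundy inequality, using the $p$-th moment counterpart of \eqref{eq:estimate_gamma} to secure $\E|\Gamma|^p \leq C$. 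The only subtlety, and hence the mild obstacle, is the random upper limit $\tau^n_s$; this is dispatched by taking the BDG maximal bound over the enclosing deterministic sub-interval, which is independent of the particular realization of $\tau^n_s$. Everything else is a line-by-line replay of the one-step computation in Lemma \ref{lem:one_step_error}.
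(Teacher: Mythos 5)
Your proposal is correct, and for the second estimate it is exactly the paper's route: rerun Lemma~\ref{lem:one_step_error} at exponent $p$, which is legitimate since $p(\xi+1)\leq q$ gives the needed $\mathscr{L}^p$-moment bounds via Lemma~\ref{lem:scm_mb} and Jensen. For the first estimate you diverge from the paper in how the random evaluation time is handled. The paper does not pass through $x^n_{\eta^n_s}$ at all: it estimates $\E\bigl(|x^n_s-x^n_{\tau^n_s}|^p \mid \mathscr{G}^\tau\bigr)$ directly, conditioning on the randomization $\sigma$-algebra so that $\tau^n_s$ becomes a deterministic time in $[\eta^n_s,\eta^n_s+\delta t]$, then applies H\"older's and the martingale inequalities conditionally on the interval between $s$ and $\tau^n_s$ (whose length is at most $\delta t$) and takes full expectation, invoking Corollary~\ref{cor:_coefficients_mb}. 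You instead split via the triangle inequality through $x^n_{\eta^n_s}$, exploit that $\eta^n_r$ and $\tau^n_r$ are constant on the subinterval to write the increment explicitly, and tame the random upper limit of the stochastic integral by dominating it with the running supremum over the enclosing deterministic subinterval and applying Burkholder--Davis--Gundy. Both devices are sound and yield the same rate $n^{-p/2}$: the conditioning argument is shorter and avoids the triangle-inequality detour, but implicitly relies on the conditional martingale property of $\int\Gamma\,\mathrm{d}w$ given $\mathscr{G}^\tau$ (valid by independence of $w$ and the $\tau_{j-1}$), whereas your maximal-inequality argument sidesteps that justification entirely at the cost of one extra term.
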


\begin{proof}
	Recall the scheme \eqref{eq:scm} and the definition of $\sigma$-algebra $\mathscr{G}^{\tau}$ from Section \ref{sec:randomized_scm}. Applying H\"older's  and martingale inequalities yields
	\begin{align*}
		\mathbb {E}\left(|x_s^n - x_{\tau^n_s}^n|^p \,\big|\, \mathscr{G}^{\tau} \right)
		&\leq |s - \tau^n_s|^{p - 1} \int_{s}^{\tau^n_s} \mathbb{E}\left( |(\widehat\mu)_{\mathrm{tm}}^{n}({\tau_r^n}, x_{\eta_r^n}^n)|^p \,\big|\, \mathscr{G}^{\tau} \right) \, \mathrm{d}r \\
		&\quad + |s - \tau^n_s|^{\frac{p}{2} - 1} \int_{s}^{\tau^n_s} \mathbb{E}\left( |\Gamma(\eta_r^n,r, x_{\eta_r^n}^n)|^p \,\big|\, \mathscr{G}^{\tau} \right) \, \mathrm{d}r
	\end{align*}
	almost surely for any $s\in[0,T]$, which on taking full expectation and applying Corollary~\ref{cor:_coefficients_mb} yields the result. 
\end{proof}

To derive the rate of convergence of the scheme~\eqref{eq:scm}, the following results are also required.
The proof of the following lemma proceeds by an argument analogous to the one given above.
\begin{cor} \label{cor:one_step_regularity_sde}
	Let Hypotheses~\mbox{\normalfont\ref{asum:ic}} to~\mbox{\normalfont\ref{asum:coercivity}} and \ref{asum:poly_lip_drift} hold. Then, for any \( s,t \in [0, T] \)
	\begin{align*}
		\mathbb{E}|x_s - x_{\tau^n_s}|^p \leq C n^{-\frac{p}{2}},  
		\quad \text{and} \quad 
		\mathbb{E}|x_s - x_t|^p \leq C |s - t|^{\frac{p}{2}}
	\end{align*}
	provided that \( p(\xi + 1) \leq q \), where \( C > 0 \) is a constant independent of \( n \in \mathbb{N} \).
\end{cor}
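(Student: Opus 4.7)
The plan is to treat the two estimates in reverse order: first establish the deterministic time-increment bound $\mathbb{E}|x_s-x_t|^p \leq C|s-t|^{p/2}$, and then derive the bound involving the random time $\tau^n_s$ by conditioning on the $\sigma$-algebra $\mathscr{G}^\tau$, mirroring the structure of the proof of Corollary~\ref{cor:one_step_error}.

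For the second estimate, I would start from the integral form of \eqref{eq:sde} to write, for $t \leq s$,
\[
    x_s - x_t = \int_t^s \mu(r,x_r)\,\mathrm{d}r + \int_t^s \rho(r,x_r)\,\mathrm{d}w_r,
\]
and then apply Hölder's inequality to the drift term and the Burkholder--Davis--Gundy (martingale) inequality to the diffusion term, obtaining
\[
    \mathbb{E}|x_s-x_t|^p \leq C|s-t|^{p-1}\int_t^s \mathbb{E}|\mu(r,x_r)|^p\,\mathrm{d}r + C|s-t|^{\frac{p}{2}-1}\int_t^s \mathbb{E}|\rho(r,x_r)|^p\,\mathrm{d}r.
\]
Under the assumption $p(\xi+1)\leq q$, Corollary~\ref{cor:_coefficients_mb} provides uniform $\mathscr{L}^p$-bounds on $\mu(r,x_r)$ and $\rho(r,x_r)$, which reduces the right-hand side to $C|s-t|^p + C|s-t|^{p/2}$; since $|s-t|\leq T$, this collapses to the desired $C|s-t|^{p/2}$.

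For the first estimate I would exploit that $\tau$ is independent of $\tilde{\mathscr{G}}$ (in particular of $x_0$ and the Brownian motion $w$), so that conditionally on $\mathscr{G}^\tau$ the random time $\tau^n_s$ is deterministic and the process $x$ retains its law. Then
\[
    \mathbb{E}|x_s-x_{\tau^n_s}|^p = \mathbb{E}\bigl[\mathbb{E}\bigl(|x_s-x_{\tau^n_s}|^p \,\big|\, \mathscr{G}^\tau\bigr)\bigr] \leq C\,\mathbb{E}|s-\tau^n_s|^{p/2}
\]
by applying the second estimate to the (conditionally deterministic) pair $(s,\tau^n_s)$. Since $\tau^n_s \in [t_{j-1},t_j]$ and $s \in [t_{j-1},t_j)$ for the same index $j$, one has $|s-\tau^n_s|\leq \delta t = T/n$, which yields the bound $Cn^{-p/2}$.

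The argument is essentially routine; the only point requiring care is the conditioning step, where one must ensure that the temporal regularity estimate for $x$ remains valid when the endpoint is a random variable independent of the driving noise. This follows because the proof of the first estimate proceeds pathwise in $(t,s)$ and uses only moments of the coefficients evaluated at $x_r$, so Fubini together with the tower property over $\mathscr{G}^\tau$ legitimately transports the deterministic estimate to the random time $\tau^n_s$.
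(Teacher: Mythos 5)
Your proposal is correct and follows essentially the same route the paper intends: the paper gives no separate proof but states that the argument is analogous to that of Corollary~\ref{cor:one_step_error}, i.e.\ conditioning on $\mathscr{G}^\tau$, applying H\"older's and the martingale (Burkholder--Davis--Gundy) inequalities to the drift and diffusion integrals, and invoking the coefficient moment bounds of Corollary~\ref{cor:_coefficients_mb} together with $|s-\tau^n_s|\le \delta t = T/n$. Your reorganization (proving the deterministic time-increment bound first and then transporting it to the random time via the tower property and independence of $\tau$ from $\tilde{\mathscr{G}}$) is a cosmetic rearrangement of the same argument and is sound.
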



The  following corollary is a consequence of the above lemma.
\begin{cor} \label{cor:regularity_drift}
	If Hypotheses \mbox{\normalfont\ref{asum:ic}} to~\mbox{\normalfont\ref{asum:coercivity}}, \textnormal{\ref{asum:poly_lip_drift}} and \textnormal{\ref{asum:holder_time_diffusion_jump}} are satisfied, then for any   $s\in[0,T]$ 
	\begin{align*}
		\E|\mu(s,x_s)-\mu(t,x_t)|^{p}&\leq  C|s-t|^{p\min\{\beta,\frac{1}{2}\}}
	\end{align*}
	provided that $ 2 p{(\xi+1)}\leq {q}$,	where $C>0$    is a constant independent of  $n\in \mathbb N$. 
\end{cor}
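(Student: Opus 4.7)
The plan is to prove the statement via the standard add-and-subtract decomposition
\[
\mu(s,x_s)-\mu(t,x_t) \;=\; \bigl[\mu(s,x_s)-\mu(t,x_s)\bigr] \;+\; \bigl[\mu(t,x_s)-\mu(t,x_t)\bigr],
\]
which cleanly separates the temporal H\"older behaviour of $\mu$ from its polynomial Lipschitz behaviour in the state variable. Taking $p$-th absolute powers and applying the elementary triangle/Jensen inequality reduces the proof to controlling the two pieces independently.

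For the first piece, I would apply Hypothesis \ref{asum:holder_time_diffusion_jump} pointwise to obtain $|\mu(s,x_s)-\mu(t,x_s)|\leq L|s-t|^{\beta}(1+|x_s|^{\xi+1})$, then take expectation. By Proposition~\ref{prop:mb:sde} together with the moment budget $p(\xi+1)\leq q$ (which is implied by the assumption $2p(\xi+1)\leq q$), the random factor has a finite $p$-th moment bounded uniformly in $s\in[0,T]$. This yields the bound $C|s-t|^{p\beta}$ for this contribution.

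For the second piece, Hypothesis \ref{asum:poly_lip_drift} gives the pointwise estimate $|\mu(t,x_s)-\mu(t,x_t)|\leq L(1+|x_s|+|x_t|)^{\xi}|x_s-x_t|$. Raising to the $p$-th power and using the Cauchy--Schwarz inequality splits this into a moment factor $\bigl(\E(1+|x_s|+|x_t|)^{2p\xi}\bigr)^{1/2}$, which is finite and uniformly bounded by Proposition~\ref{prop:mb:sde} since $2p\xi\leq 2p(\xi+1)\leq q$, times a regularity factor $\bigl(\E|x_s-x_t|^{2p}\bigr)^{1/2}$. The latter is exactly what Corollary~\ref{cor:one_step_regularity_sde} controls under the admissibility condition $2p(\xi+1)\leq q$, producing the bound $C|s-t|^{p/2}$ for the spatial contribution.

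Combining the two estimates gives $\E|\mu(s,x_s)-\mu(t,x_t)|^{p}\leq C|s-t|^{p\beta}+C|s-t|^{p/2}$, and since $|s-t|\leq T$ one can bound the larger exponent by the smaller one up to a multiplicative constant depending only on $T$, arriving at the claimed rate $C|s-t|^{p\min\{\beta,1/2\}}$. The main routine obstacle is bookkeeping the moment exponents to ensure the hypothesis $2p(\xi+1)\leq q$ precisely covers both the $p$-th moment of the polynomial growth factor and the $2p$-th moment required for the Cauchy--Schwarz application; no deep new idea is needed beyond the results already established.
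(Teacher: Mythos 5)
Your proof is correct and follows exactly the route the paper intends: the paper gives no written proof of this corollary, stating only that it is a consequence of Corollary~\ref{cor:one_step_regularity_sde}, and your decomposition into a temporal increment (handled by Hypothesis~\ref{asum:holder_time_diffusion_jump} and the moment bound of Proposition~\ref{prop:mb:sde}) plus a spatial increment (handled by Hypothesis~\ref{asum:poly_lip_drift}, Cauchy--Schwarz, and the $\E|x_s-x_t|^{2p}\leq C|s-t|^{p}$ estimate) is precisely that argument. The exponent bookkeeping under $2p(\xi+1)\leq q$ is also handled correctly.
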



To establish the convergence rate of the randomized-tamed Milstein scheme~\eqref{eq:scm} for the SDE~\eqref{eq:sde}, the existing analytical techniques for either the randomized Milstein schemes~\cite{Verena2024} or the tamed counterparts~\cite{Kumar2021a, Kumar2017} prove insufficient. This necessitates the development of a new analytical framework. The core challenges are resolved through a two-fold strategy: one component, based on randomization, is addressed in Lemma~\ref{lem:estimate_first_term_MR}, while the other, involving the application of It\^o's lemma together with the taming mechanism, is treated in Lemma~\ref{lem:estimate_last_term_MR}. An essential ingredient of the analysis involves the auxiliary process defined, for all \( t \in [0, T] \), by
\begin{align}
	z_t^n 
	&= x_0 
	+ \int_0^t \mu(\tau^n_s, x_{\tau^n_s})  \,\mathrm{d}s 
	+ \int_0^t \rho(s, x_s) \, \mathrm{d}w_s.
	\label{eq:auxiliary_equation}
\end{align}

\begin{remark} \label{rem:mb_auxilary}
	In view of  Remark~\mbox{\normalfont\ref{rem:super_linear}} and Proposition~\mbox{\normalfont\ref{prop:mb:sde}},  the auxiliary process \( z_t^n \) satisfies   
	\(
	\sup_{t \in [0, T]} \mathbb{E}|z_t^n|^p \leq C,
	\)
	provided that \( p(\xi + 1) \leq q \), where \( C > 0 \) is a constant independent of \( n \in \mathbb{N} \).
\end{remark}
\begin{remark} \label{rem:mb_auxilary_derv}
	As a consequence of Remarks~\mbox{\normalfont\ref{rem:super_linear}}, \mbox{\normalfont\ref{rem:mb_auxilary}}, and Hypothesis~\mbox{\normalfont\ref{asum:suplin_second_derv_mu}}, it holds that
	\[
	\sup_{t \in [0,T]} \mathbb{E}\left(\, |\partial_x \mu(t, z_t^n)|^p + |\partial_x^2 \mu(t, z_t^n)|^p \,\right) \le C
	\]
	provided that \( p(\xi + 1) \leq q \),
	where \(C>0\) is independent of \(n\in\mathbb{N}\).
\end{remark}
The subsequent results provide essential estimates, constituting a pivotal step in establishing the convergence rate of the scheme~\eqref{eq:scm}.
\begin{lemma} \label{lem:estimate_first_term_MR}
	Let Hypotheses \mbox{\normalfont  \ref{asum:ic}} to \mbox{\normalfont \ref{asum:coercivity}}, \textnormal{\ref{asum:poly_lip_drift}} and \textnormal{\ref{asum:holder_time_diffusion_jump}} hold.
	Then, for any   $s\in[0,T]$ 
	\begin{align*}
		&\E|x_s-z_s^{n}|^{p}
		\leq  C  n^{-p\min\{\beta+\frac{1}{2},1\}} 
	\end{align*}
	provided that $p(\xi+1)\leq q$,	where $C>0$    is a constant independent of  $n\in \mathbb N$. 
\end{lemma}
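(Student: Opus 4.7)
The plan is to exploit the randomization embedded in the auxiliary process via a conditional variance argument. First, one writes
\begin{align*}
	x_t - z_t^n = \int_0^t \bigl[\mu(s, x_s) - \mu(\tau^n_s, x_{\tau^n_s})\bigr]\,\mathrm{d}s,
\end{align*}
and, noting that $\tau^n_s$ is constant on each subinterval $[t_k,t_{k+1})$ with common value $\tau^*_k := t_k + \delta t\,\tau_k$, one decomposes the error as $\sum_{k=0}^{j-1} A_k + R$, where
\begin{align*}
	A_k := \int_{t_k}^{t_{k+1}} \mu(r, x_r)\,\mathrm{d}r - \delta t\,\mu(\tau^*_k, x_{\tau^*_k}),
\end{align*}
$j$ is the largest index with $t_j \le t$, and $R$ is the remainder on $[t_j, t]$. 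The crucial structural property is that, conditional on $\tilde{\mathscr{G}}_T$, the quantity $\delta t\,\mu(\tau^*_k, x_{\tau^*_k})$ is an unbiased estimator of $\int_{t_k}^{t_{k+1}}\mu(r, x_r)\,\mathrm{d}r$, since $\tau^*_k$ is uniform on $[t_k,t_{k+1}]$ and independent of $\tilde{\mathscr{G}}_T$. Consequently $\E^\tau[A_k \mid \tilde{\mathscr{G}}_T] = 0$, and the family $(A_k)_k$ is conditionally independent by the independence of the $\tau_k$'s.

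This structure calls for a conditional Rosenthal (or discrete BDG) inequality, giving
\begin{align*}
	\E^\tau\Bigl|\sum_k A_k\Bigr|^p \le C_p\Bigl[\Bigl(\sum_k \E^\tau A_k^2\Bigr)^{p/2} + \sum_k \E^\tau|A_k|^p\Bigr].
\end{align*}
A direct variance computation bounds $\E^\tau A_k^2$ by $\tfrac12\,\delta t^2$ times the square of the pathwise oscillation $\sup_{r,r'\in[t_k,t_{k+1}]}|\mu(r,x_r)-\mu(r',x_{r'})|$. This oscillation splits naturally into a time-regularity piece controlled by Hypothesis~\ref{asum:holder_time_diffusion_jump} (of order $\delta t^\beta$) and a space-regularity piece controlled by Hypothesis~\ref{asum:poly_lip_drift} combined with the trajectory bound $\E|x_r-x_{r'}|^p \le C|r-r'|^{p/2}$ from Corollary~\ref{cor:one_step_regularity_sde} (of order $\delta t^{1/2}$). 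Taking full expectation and invoking Proposition~\ref{prop:mb:sde} under $p(\xi+1) \le q$, the summed variance is of order $\delta t^{2\beta+1} + \delta t^2$, so both the variance and the tail contributions in Rosenthal are bounded by $C\,n^{-p\min\{\beta+\frac12,\,1\}}$ for $p \ge 2$. The partial-interval remainder $R$ has a pathwise bound giving $\E|R|^p = O(n^{-p})$, which is dominated by the target rate.

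The main technical obstacle is the $L^p$-control of the random oscillation $\sup_{r,r'}|\mu(r,x_r)-\mu(r',x_{r'})|$, which requires a careful H\"older-type splitting that decouples the polynomial growth factor $(1+|x_r|+|x_{r'}|)^\xi$ from the Brownian displacement $|x_r-x_{r'}|$; the moment condition $p(\xi+1) \le q$ is used in an essential way to keep the estimate sharp. A further subtlety is that Rosenthal's inequality must be applied conditionally on $\tilde{\mathscr{G}}_T$ rather than unconditionally, since it is precisely the conditional mean-zero property of the $A_k$'s that converts the naive rate $\beta$ obtainable from pathwise $L^1$-bounds into the improved rate $\beta+\tfrac12$ characteristic of randomized quadrature.
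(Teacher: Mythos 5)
Your proposal is correct and follows essentially the same route as the paper: the same decomposition into conditionally unbiased, conditionally independent per-interval errors $A_k$ plus a partial-interval remainder, with the extra half-order coming from the conditional mean-zero structure and the drift regularity $\E|\mu(s,x_s)-\mu(t,x_t)|^p\le C|s-t|^{p\min\{\beta,\frac12\}}$. The only difference is presentational: the paper outsources the Rosenthal-type step to \cite[Theorem~4.1]{Kruse2019} (stated in terms of pointwise H\"older seminorms of $s\mapsto\mu(s,x_s)$, which avoids the pathwise oscillation supremum you invoke), whereas you re-derive its content via a conditional Rosenthal inequality.
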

\begin{proof}
	Firstly, introduce the following notations, for any \( t \in [0, T] \),
	\[
	\mathcal{J}(t) := \max\left\{j \in \{0, \ldots, n\} \,\big|\, \frac{jT}{n} \le t \right\},
	\quad 
	\mathcal{K}(t) := \frac{\mathcal{J}(t)T}{n}.
	\]
	Further, recall the definitions of $\tilde{\mathscr G}$ and $\mathbb E^\tau$ from Sections \ref{sec:intro} and \ref{sec:randomized_scm}, respectively.  Then, for each \( j \in \{1, \ldots, n\} \), observe that
	\begin{align*}
		\mathbb{E}\Big(\int_{t_{j-1}}^{t_j}\mu(\tau^n_s, x_{\tau^n_s})\,\mathrm{d}s \,\big|\, \tilde{\mathscr{G}} \Big)
		&= \mathbb{E}^\tau \int_{t_{j-1}}^{t_j} \mu(t_{j-1} + \delta t  \,
		\tau_{j-1},\, x_{t_{j-1} + \delta t \, \tau_{j-1}})  \,\mathrm{d}s \\
		&= \delta t \int_0^1 \mu(t_{j-1} + \delta t \, r,\, x_{t_{j-1} + \delta t \,r}) \,  \mathrm{d}r \\
		&= \int_{t_{j-1}}^{t_j} \mu(s, x_s)\, \mathrm{d}s
	\end{align*}
	which enables the application of \cite[Theorem 4.1]{Kruse2019} to derive the following estimate 
	\begin{align} \label{eq:term_1}
		\mathbb{E}\bigg|\sum_{j=1}^{\mathcal{J}(t)} & \int_{t_{j-1}}^{t_j} \big(\mu(s, x_s) - \mu(\tau^n_s, x_{\tau^n_s})\big) \,\mathrm{d}s \bigg|^p \notag
		\\ 
		&\leq \mathbb{E}\bigg( \mathbb{E}\bigg[ \max_{i \in \{1,\ldots,n\}} \bigg| \sum_{j=1}^{i} \int_{t_{j-1}}^{t_j} \big(\mu(s, x_s) - \mu(\tau^n_s, x_{\tau^n_s})\big) \,\mathrm{d}s \bigg|^p \,\Big|\, \tilde{\mathscr{G}} \bigg] \bigg) \notag \\
		&\leq C \sup_{\substack{s,t \in [0,T] \\ s \neq t}} \bigg( \mathbb{E}|\mu(s, x_s)|^p + \frac{\mathbb{E}|\mu(s, x_s) - \mu(t, x_t)|^p}{|s - t|^{p \min\{\beta, \frac{1}{2}\}}} \,\bigg) n^{-p \min\{\beta + \frac{1}{2}, 1\}} \notag \\
		&\leq C n^{-p \min\{\beta + \frac{1}{2}, 1\}}.
	\end{align}
	for any \( t \in [0,T] \), where for the last step, one utilizes Corollaries \ref{cor:_coefficients_mb} and \ref{cor:regularity_drift}.

	Now, using the definitions of \( x_t \) from \eqref{eq:sde}, \( z_t^n \) from \eqref{eq:auxiliary_equation}, the estimate \eqref{eq:term_1} and H\"older's inequality, one obtains
	\begin{align*} 
		\mathbb{E}|x_t - z_t^n|^p 
		&= \mathbb{E}\left| \int_0^t \big( \mu(s, x_s) - \mu(\tau^n_s, x_{\tau^n_s}) \big) \,\mathrm{d}s \right|^p \\
		&\leq C \mathbb{E}\bigg| \sum_{j=1}^{\mathcal{J}(t)} \int_{t_{j-1}}^{t_j} \big( \mu(s, x_s) - \mu(\tau^n_s, x_{\tau^n_s}) \big) \,\mathrm{d}s \bigg|^p \\
		&\quad + C \mathbb{E}\bigg| \int_{\mathcal{K}(t)}^{t} \big( \mu(s, x_s) - \mu(\tau^n_s, x_{\tau^n_s}) \big) \,\mathrm{d}s \bigg|^p \\
		&\leq C n^{-p \min\{\beta + \frac{1}{2}, 1\}} + C (t - \mathcal{K}(t))^p \sup_{s \in [0,T]} \mathbb{E}\big|\mu(s, x_s)\big|^p
	\end{align*}
	for any \( t \in [0,T] \). Finally, invoking Corollary~\ref{cor:_coefficients_mb} completes the proof.
\end{proof}

\begin{lemma} \label{lem:conv_tame_drift}
	Suppose that Hypotheses \mbox{\normalfont  \ref{asum:ic}} to  \textnormal{\ref{asum:convergence}} are satisfied. Then, for any $s \in [0, T]$
	\begin{align*}
		\E\big|\mu(\tau_s^n,x_{\tau^n_s})-(\widehat {\mu})_{\mathrm{tm}}^{\displaystyle n}(\tau_s^n,x_{\eta^n_s}^{n})\big|^p\leq Cn^{-p\min\{\beta,\frac{1}{2}\}}+ Cn^{\frac{p}{2}}\sup_{ r\in[0,s]}\E|z_r^n-x_r^n|^p
	\end{align*}
	provided that $2p(\xi+1)\leq q$,	where $C>0$    is a constant independent of  $n\in \mathbb N$. 
\end{lemma}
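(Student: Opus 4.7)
\emph{Proof proposal.} The plan is to begin with the triangle inequality
\begin{align*}
|\mu(\tau_s^n,x_{\tau^n_s})-(\widehat{\mu})^{n}_{\mathrm{tm}}(\tau_s^n,x^n_{\eta^n_s})|
&\leq |\mu(\tau_s^n,x_{\tau^n_s})-\mu(\tau_s^n,x^n_{\eta^n_s})| \\
&\quad +|\mu(\tau_s^n,x^n_{\eta^n_s})-(\widehat{\mu})^{n}_{\mathrm{tm}}(\tau_s^n,x^n_{\eta^n_s})|,
\end{align*}
and to control the second summand directly via Hypothesis~\ref{asum:convergence} combined with Jensen's inequality (since $p\leq q$); this produces a contribution of order $Cn^{-p}$, which is absorbed into $Cn^{-p\min\{\beta,1/2\}}$. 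For the first summand I would invoke the polynomial Lipschitz estimate of Hypothesis~\ref{asum:poly_lip_drift} and then introduce the auxiliary process through the decomposition $x_{\tau^n_s}-x^n_{\eta^n_s}=(x_{\tau^n_s}-x_{\eta^n_s})+(x_{\eta^n_s}-z^n_{\eta^n_s})+(z^n_{\eta^n_s}-x^n_{\eta^n_s})$.

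The first two pieces of this decomposition are routine. Using the uniform moment bound $\E(1+|x_{\tau^n_s}|+|x^n_{\eta^n_s}|)^{2p\xi}\leq C$, which is valid under $2p(\xi+1)\leq q$ in view of Proposition~\ref{prop:mb:sde} and Lemma~\ref{lem:scm_mb}, Cauchy--Schwarz combined with Corollary~\ref{cor:one_step_regularity_sde} applied at order $2p$ yields a contribution of order $Cn^{-p/2}$ from the piece $(x_{\tau^n_s}-x_{\eta^n_s})$, while combined with Lemma~\ref{lem:estimate_first_term_MR} applied at order $2p$ it gives a contribution of order $Cn^{-p\min\{\beta+1/2,1\}}$ from the piece $(x_{\eta^n_s}-z^n_{\eta^n_s})$. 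Since $\min\{\beta+1/2,1\}=\min\{\beta,1/2\}+\tfrac{1}{2}$, both are dominated by $Cn^{-p\min\{\beta,1/2\}}$.

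The delicate piece, the only one contributing the $n^{p/2}\sup_r\E|z^n_r-x^n_r|^p$ term, is $\E[(1+|x_{\tau^n_s}|+|x^n_{\eta^n_s}|)^{p\xi}|z^n_{\eta^n_s}-x^n_{\eta^n_s}|^p]$. I would split this expectation according to whether the event $E:=\{(1+|x_{\tau^n_s}|+|x^n_{\eta^n_s}|)^{2\xi}\leq n\}$ holds. On $E$ the polynomial weight is pointwise dominated by $n^{p/2}$, so the contribution is at most $n^{p/2}\E|z^n_{\eta^n_s}-x^n_{\eta^n_s}|^p\leq n^{p/2}\sup_{r\in[0,s]}\E|z^n_r-x^n_r|^p$, which is exactly the first term of the target bound. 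On the complementary tail event $E^c$ I would employ the Chebyshev-type inequality $\mathbf{1}_{E^c}\leq ((1+|x_{\tau^n_s}|+|x^n_{\eta^n_s}|)^{2\xi}/n)^{\delta}$ for a suitable $\delta\geq p\min\{\beta,1/2\}$, and close the estimate via Cauchy--Schwarz, keeping every polynomial moment within the budget $2p(\xi+1)\leq q$ prescribed by Proposition~\ref{prop:mb:sde}, Lemma~\ref{lem:scm_mb} and Remark~\ref{rem:mb_auxilary}; this produces the deterministic error $Cn^{-p\min\{\beta,1/2\}}$.

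The main obstacle is precisely this final splitting. A naive $L^2\times L^2$ H\"older bound applied globally to $(1+|x|)^{p\xi}$ against $|z^n-x^n|^p$ would only yield a factor $(\E|z^n-x^n|^{2p})^{1/2}$ instead of the linear $\sup_r\E|z^n_r-x^n_r|^p$ that is required for the subsequent Gr\"onwall closure in the proof of Theorem~\ref{thm:main_result}. Avoiding H\"older on the factor $|z^n-x^n|^p$ inside the good set $E$ is the crux of the argument: it preserves the $p$-th moment intact and produces the exact amplification by $n^{p/2}$ intrinsic to the taming mechanism, while the Chebyshev-type decay on $E^c$, together with the available moments of $x$, $x^n$ and $z^n$, absorbs the residual polynomial weight and delivers the desired rate $Cn^{-p\min\{\beta,1/2\}}$.
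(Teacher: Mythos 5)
The overall architecture you propose (triangle inequality, Hypothesis~\ref{asum:convergence} plus Jensen for the tamed-vs-untamed gap, the polynomial Lipschitz bound of Hypothesis~\ref{asum:poly_lip_drift}, and a routing through the auxiliary process $z^n$) is sound, and your treatment of the pieces $(x_{\tau^n_s}-x_{\eta^n_s})$ and $(x_{\eta^n_s}-z^n_{\eta^n_s})$ is correct. The gap is exactly where you place the crux: the tail estimate on $E^c$ does not close under the stated moment budget. On $E^c$ you must control
$\E\bigl[(1+|x_{\tau^n_s}|+|x^n_{\eta^n_s}|)^{p\xi}\,|z^n_{\eta^n_s}-x^n_{\eta^n_s}|^p\,\mathbf{1}_{E^c}\bigr]$,
and the factor $|z^n_{\eta^n_s}-x^n_{\eta^n_s}|^p$ can only be paid for with moments of $z^n$ (bounded only up to order $q/(\xi+1)$ by Remark~\ref{rem:mb_auxilary}) or, after H\"older, with $\E|z^n-x^n|^{2p}$, which forces the conjugate exponent on the weighted indicator to be $2$. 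Taking the boundary case $q=2p(\xi+1)$, the Chebyshev step gives $\E\bigl[(1+\cdots)^{2p\xi}\mathbf{1}_{E^c}\bigr]\le n^{-\delta}\,\E(1+\cdots)^{2p\xi+2\xi\delta}$ with the constraint $2p\xi+2\xi\delta\le q$, i.e.\ $\delta\le p/\xi$, so the best decay obtainable is $n^{-p/(2\xi)}$. This is strictly worse than the required $n^{-p\min\{\beta,1/2\}}$ whenever $\min\{\beta,\tfrac12\}>\tfrac{1}{2\xi}$ --- already for the paper's own FitzHugh--Nagumo example ($\xi=2$) with any $\beta>\tfrac14$. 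The same obstruction appears in every Young/H\"older rearrangement of that term; your phrase ``keeping every polynomial moment within the budget'' is precisely what cannot be done here.

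The paper avoids this by never attaching the polynomial weight to a difference involving $z^n$. It decomposes through $x_s-x_s^n$ (both factors carry full $q$-th moments), applies the asymmetric Young bound $W\,D^p\le n^{-p/2}W^2D^p+n^{p/2}D^p$ with $D=|x_s-x_s^n|$, and then uses the crude inequality $|x_s-x_s^n|\le|x_s|+|x_s^n|$ so that $W^2D^p$ is controlled by $1+|x_s|^{2p\xi+p}+|x_s^n|^{2p\xi+p}$, which needs only $2p\xi+p\le q$. Only \emph{after} the weight has been eliminated does it split $n^{p/2}\E|x_s-x_s^n|^p\le n^{p/2}\sup_r\E|x_r-z_r^n|^p+n^{p/2}\sup_r\E|z_r^n-x_r^n|^p$ and absorb the first summand via Lemma~\ref{lem:estimate_first_term_MR}. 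Your argument can be repaired in the same spirit: on $E^c$, do not keep $|z^n_{\eta^n_s}-x^n_{\eta^n_s}|$ intact, but re-split it as $(z^n-x)+(x-x^n)$, bound the second difference crudely by $|x|+|x^n|$ (full moments), and use Lemma~\ref{lem:estimate_first_term_MR} at order $2p$ for the first; as written, however, the step fails.
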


\begin{proof}
	
	Use Hypotheses \ref{asum:poly_lip_drift}, \ref{asum:convergence}, and Young's inequality together with  Proposition \ref{prop:mb:sde}, Lemma \ref{lem:scm_mb}  and Corollaries \ref{cor:one_step_error}, \ref{cor:one_step_regularity_sde} to get 
	\begin{align}
		\E\big|\mu(\tau_s^n&,x_{\tau^n_s})-(\widehat {\mu})_{\mathrm{tm}}^{\displaystyle n}(\tau_s^n,x_{\eta^n_s}^{n})\big|^p
		\leq C	\E\big|\mu(\tau_s^n,x_{s})-{\mu}(\tau_s^n,x_{s}^{n})\big|^p\notag
		\\
		&\quad+C	\E\big|\mu(\tau_s^n,x_{\tau^n_s})-{\mu}(\tau_s^n,x_{s})\big|^p+C	\E\big|\mu(\tau_s^n,x_{s}^n)-{\mu}(\tau_s^n,x_{\eta^n_s}^{n})\big|^p\notag
		\\
		&\quad+C\E\big|\mu(\tau_s^n,x_{\eta^n_s}^n)-(\widehat {\mu})_{\mathrm{tm}}^{\displaystyle n}(\tau_s^n,x_{\eta^n_s}^{n})\big|^p\notag
		\\
		&\leq 	C\E\big((1+|x_{s}|^{p\xi}+|x_{s}^n|^{p \xi})|x_{s}-x_{s}^n|^{p}\big) \notag
		\\
		&\quad+C\E\big((1+|x_{\tau^n_s}|^{p\xi}+|x_{s}|^{p \xi})|x_{\tau^n_s}-x_{s}|^{p}\big) \notag
		\\
		&\quad+C\E\big((1+|x_{s}^n|^{p\xi}+|x_{\eta^n_s}^n|^{p \xi})|x_{s}^n-x_{\eta^n_s}^n|^{p}\big) +Cn^{-p} \notag
		\\
		&\leq 	Cn^{-\frac{p}{2}}\E\big(1+|x_{s}|^{2p\xi+p}+|x_{s}^n|^{2p\xi+p}\big)+Cn^{\frac{p}{2}}\E|x_{s}-x_{s}^n|^{p} \notag
		\\
		&\quad+Cn^{-\frac{p}{2}}\E\big(1+|x_{\tau^n_s}|^{2p\xi}+|x_{s}|^{2p\xi}\big)+Cn^{\frac{p}{2}}\E|x_{\tau^n_s}-x_{s}|^{2p} \notag
		\\
		&\quad+Cn^{-\frac{p}{2}}\E\big(1+|x_{s}^n|^{2p\xi}+|x_{\eta^n_s}^n|^{2p\xi}\big)+Cn^{\frac{p}{2}}\E|x_{s}^n-x_{\eta^n_s}^n|^{2p}  +Cn^{-p} \notag
		\\
		&\leq  Cn^{-\frac{p}{2}}+ Cn^{\frac{p}{2}}\sup_{ r\in[0,s]}\E|x_r-z_r^n|^p + Cn^{\frac{p}{2}}\sup_{ r\in[0,s]}\E|z_r^n-x_r^n|^p \notag
		\\
		&\leq  Cn^{-p\min\{\beta,\frac{1}{2}\}}+ Cn^{\frac{p}{2}}\sup_{ r\in[0,s]}\E|z_r^n-x_r^n|^p  \notag
	\end{align}
	for any $s\in[0,T]$, where the last step follows	from Lemma \ref{lem:estimate_first_term_MR}. {(the proof is changed)}
\end{proof}
\begin{lemma}\label{lem:rho}
	Assume that Hypotheses \mbox{\normalfont  \ref{asum:ic}},  \mbox{\normalfont  \ref{asum:monotonocity}},  \mbox{\normalfont  \ref{asum:tame}} to  \mbox{\normalfont  \ref{asum:poly_lip_drift}}, and \textnormal{\ref{asum:lip_first_derv_rho}} hold. Then, for any $s \in [0, T]$
	\begin{align*}
		\E\big|\rho(\eta^n_s, x_{s}^n)-  \Gamma(\eta^n_s,s,x_{\eta^n_s}^{n})\big|^{p} \ \leq  Cn^{-p}
	\end{align*}
	provided that $2p(\xi+1)\leq q$,	where $C>0$    is a constant independent of  $n\in \mathbb N$. 
\end{lemma}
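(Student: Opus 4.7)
The plan is to apply It\^o's formula to $\rho^{ij}(\eta^n_s,\cdot)$ evaluated along the scheme, expanding $\rho^{ij}(\eta^n_s, x_s^n) - \rho^{ij}(\eta^n_s, x_{\eta^n_s}^n)$ on the interval $[\eta^n_s, s]$ (which has length at most $\delta t \leq T/n$ and satisfies $\eta^n_r = \eta^n_s$ for every $r$ in it). This yields
\begin{align*}
\rho^{ij}(\eta^n_s, x_s^n) - \rho^{ij}(\eta^n_s, x_{\eta^n_s}^n)
&= \int_{\eta^n_s}^{s} \partial_x \rho^{ij}(\eta^n_s, x_r^n)\,(\widehat{\mu})_{\mathrm{tm}}^{n}(\tau^n_r, x_{\eta^n_r}^n)\,\mathrm{d}r \\
&\quad + \int_{\eta^n_s}^{s} \partial_x \rho^{ij}(\eta^n_s, x_r^n)\,\Gamma(\eta^n_r, r, x_{\eta^n_r}^n)\,\mathrm{d}w_r \\
&\quad + \tfrac{1}{2}\int_{\eta^n_s}^{s} \operatorname{trace}\!\bigl(\Gamma^{*}\partial_x^{2}\rho^{ij}(\eta^n_s, x_r^n)\,\Gamma\bigr)\,\mathrm{d}r.
\end{align*}
Subtracting the defining stochastic integral of $\Gamma^{ij}$ from \eqref{eq:Gam}, one obtains a representation of $\rho^{ij}(\eta^n_s, x_s^n) - \Gamma^{ij}(\eta^n_s, s, x_{\eta^n_s}^n)$ as the sum of the drift integral, the It\^o-correction integral, and the corrected stochastic integral
\[
\mathcal{I}^{ij}_s := \int_{\eta^n_s}^{s} \bigl[\partial_x \rho^{ij}(\eta^n_s, x_r^n)\,\Gamma(\eta^n_r, r, x_{\eta^n_r}^n) - \partial_x \rho^{ij}(\eta^n_r, x_{\eta^n_r}^n)\,\rho(\eta^n_r, x_{\eta^n_r}^n)\bigr]\,\mathrm{d}w_r.
\]

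The first and third integrals are estimated directly by H\"older's inequality: raising to the $p$-th power produces a factor $(\delta t)^{p-1} \leq C n^{-(p-1)}$ outside, an inner $\mathrm{d}r$-integral of length at most $C n^{-1}$, and bounded $\mathscr{L}^p$-moments of the integrands by Remark \ref{rem:super_linear} together with Corollary \ref{cor:_coefficients_mb} (which supplies the $\mathscr{L}^p$-bounds of $(\widehat{\mu})_{\mathrm{tm}}^{n}$ and of $\Gamma$, the latter as long as $2p \leq q$ via \eqref{eq:estimate_gamma}). This yields the desired $C n^{-p}$ bound for both of these contributions.

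The main obstacle is the stochastic integral $\mathcal{I}^{ij}_s$, since a direct martingale inequality would only give $C n^{-p/2}$. The key is to split its integrand by adding and subtracting $\partial_x \rho^{ij}(\eta^n_s, x_{\eta^n_s}^n)\,\Gamma(\eta^n_r, r, x_{\eta^n_r}^n)$, producing two pieces:
\[
\bigl[\partial_x \rho^{ij}(\eta^n_s, x_r^n) - \partial_x \rho^{ij}(\eta^n_s, x_{\eta^n_s}^n)\bigr]\,\Gamma(\eta^n_r, r, x_{\eta^n_r}^n),
\]
and
\[
\partial_x \rho^{ij}(\eta^n_r, x_{\eta^n_r}^n)\,\bigl[\Gamma(\eta^n_r, r, x_{\eta^n_r}^n) - \rho(\eta^n_r, x_{\eta^n_r}^n)\bigr].
\]
For the first piece, Hypothesis \ref{asum:lip_first_derv_rho} yields a bound by $L|x_r^n - x_{\eta^n_s}^n|$, which via Corollary \ref{cor:one_step_error} gives an $\mathscr{L}^{2p}$-norm of order $n^{-1/2}$; combined with the uniform $\mathscr{L}^{2p}$-bound on $\Gamma$ and the Cauchy--Schwarz inequality, the $\mathscr{L}^{p}$-norm of this piece is of order $n^{-1/2}$. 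For the second piece, $\partial_x \rho^{ij}$ is bounded by Remark \ref{rem:super_linear}, while $\Gamma - \rho$ is itself a stochastic integral over $[\eta^n_r, r]$ with bounded $\mathscr{L}^p$ integrand (by Remark \ref{rem:super_linear} and Lemma \ref{lem:scm_mb}), so the martingale inequality and $|r - \eta^n_r| \leq Cn^{-1}$ yield an $\mathscr{L}^p$-norm of order $n^{-1/2}$.

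Combining these two estimates, the integrand of $\mathcal{I}^{ij}_s$ has $\mathscr{L}^p$-norm of order $n^{-1/2}$ uniformly on $[\eta^n_s, s]$. Applying the martingale inequality to $\mathcal{I}^{ij}_s$ therefore yields
\[
\E|\mathcal{I}^{ij}_s|^{p} \leq C n^{-p/2 + 1} \int_{\eta^n_s}^{s} n^{-p/2}\,\mathrm{d}r \leq C n^{-p},
\]
where the requirement $2p(\xi+1) \leq q$ ensures that all invoked moment bounds from Corollaries \ref{cor:_coefficients_mb} and \ref{cor:one_step_error} are available. Summing over $i \in \{1, \ldots, d\}$ and $j \in \{1, \ldots, m\}$ completes the argument.
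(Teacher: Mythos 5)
Your proposal is correct and follows essentially the same route as the paper: It\^o's formula applied to $x\mapsto\rho^{ij}(\eta^n_s,x)$ along the scheme, cancellation of $\rho^{ij}(\eta^n_s,x^n_{\eta^n_s})$ against the first term of $\Gamma^{ij}$, direct $Cn^{-p}$ bounds for the drift and It\^o-correction integrals, and the same add-and-subtract trick on the residual martingale (Lipschitz continuity of $\partial_x\rho^{ij}$ for one piece, the iterated stochastic integral of length $O(n^{-1})$ for the other) to get an $O(n^{-1/2})$ integrand before applying the Burkholder--Davis--Gundy inequality. The only difference is cosmetic --- you pair the increment of $\partial_x\rho^{ij}$ with $\Gamma$ rather than with $\rho$ --- which changes nothing in the estimates.
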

\begin{proof}
	Firstly, recall \eqref{eq:Gam} and for the notational simplicity, define the following martingale component
	\begin{align}
		\mathfrak M ^{ij}:&=\int_{\eta^n_s}^s \partial_x \rho^{ij}\big(\eta^n_s, x_{r}^{n}\big){\Gamma}\big(\eta^n_r,r,x_{\eta^n_r}^{n}\big) \, \mathrm{d}w_r- \int_{\eta^n_s}^s \partial_x \rho^{ij}\big(\eta^n_r, x_{\eta^n_r}^{n}\big){\rho}\big(\eta^n_r,x_{\eta^n_r}^{n}\big) \, \mathrm{d}w_r  \notag
		\\
		&=\int_{\eta^n_s}^s\big( \partial_x \rho^{ij}(\eta^n_r, x_{r}^{n})-\partial_x \rho^{ij}(\eta^n_r, x_{\eta^n_r}^{n})\big){\rho}(\eta^n_r,x_{\eta^n_r}^{n}) \, \mathrm{d}w_r \notag
		\\
		&\quad+ \int_{\eta^n_s}^s \partial_x \rho^{ij}(\eta^n_r, x_{\eta^n_r}^{n})\int_{\eta^n_r}^r\partial_x{\rho}^{ij}(\eta^n_s,x_{\eta^n_s}^{n}){\rho}(\eta^n_s,x_{\eta^n_s}^{n}) \, \mathrm{d}w_s\,   \mathrm{d}w_r \label{eq:martingale_term}
	\end{align}
	for any $s\in[0,T]$, $i\in\{1,\ldots,d\}$ and $j\in\{1,\ldots,m\}$, whose 
	$p$-th moment  is estimated using martingale   and H\"older's  inequalities, under Hypothesis~\ref{asum:lip_first_derv_rho} and    Remark~\ref{rem:super_linear}, as follows
	\begin{align} \label{eq:estimate_martingale_term}
		\E|\mathfrak M^{ij}|^p&=\,Cn^{-\frac{p}{2}+1}\E\int_{\eta^n_s}^s\big|\partial_x \rho^{ij}(\eta^n_r, x_{r}^{n})-\partial_x \rho^{ij}(\eta^n_r, x_{\eta^n_r}^{n})\big|^p\big|{\rho}(\eta^n_r,x_{\eta^n_r}^{n})\big|^p \, \mathrm{d}r \notag
		\\
		&\quad+
		Cn^{-\frac{p}{2}+1}\E\int_{\eta^n_s}^s\big|\partial_x{\rho}^{ij}(\eta^n_r,x_{\eta^n_r}^{n})\big|^p\Big|\int_{\eta^n_r}^r\partial_x{\rho}^{ij}(\eta^n_s,x_{\eta^n_s}^{n}){\rho}(\eta^n_s,x_{\eta^n_s}^{n}) \, \mathrm{d}w_s\Big|^p \, \mathrm{d}r \notag
		\\
		&\leq Cn^{-\frac{p}{2}+1}\int_{\eta^n_s}^s\big(\E| x_{r}^{n}- x_{\eta^n_r}^{n}|^{2p}\big)^{1/2}\big(\E|{\rho}(\eta^n_r,x_{\eta^n_r}^{n})|^{2p}\big)^{1/2} \, \mathrm{d}r  \notag
		\\
		&\quad+	Cn^{-p+1}\,\E\int_{\eta^n_r}^r\big|\partial_x{\rho}^{ij}(\eta^n_s,x_{\eta^n_s}^{n}){\rho}(\eta^n_s,x_{\eta^n_s}^{n})\big|^p \, \mathrm{d}r \leq
		Cn^{-p}
	\end{align}
	for any $s\in[0,T]$, $p\geq 2$, $i\in\{1,\ldots,d\}$ and $j\in\{1,\ldots,m\}$, where the final inequality follows from  Corollaries   \ref{cor:_coefficients_mb}, \ref{cor:one_step_error} and Remark   \ref{rem:super_linear}. Moreover, recall the scheme \eqref{eq:scm}, and apply It\^o’s formula \cite[Theorem 94]{Situ2006} to the map \( x \mapsto \rho^{ij}(\eta^n_s, x) \) to obtain
	\begin{align*}
		\rho^{ij}(\eta^n_s, x_s^n) - &\Gamma^{ij}(\eta^n_s, s, x_{\eta^n_s}^n)
		= \int_{\eta^n_s}^s \partial_x \rho^{ij}(\eta^n_s, x_r^n)(\widehat \mu)_{\mathrm{tm}}^{n}(\tau^n_r, x_{\eta^n_r}^n) \,  \mathrm{d}r \\
		&\quad + \frac{1}{2} \int_{\eta^n_s}^{s} \tr\Big[\partial_x^2 \rho^{ij}(\eta^n_s, x_r^n) 
		\Gamma(\eta^n_r, r, x_{\eta^n_r}^n)\Gamma^*(\eta^n_r, r, x_{\eta^n_r}^n)\Big] \,  \mathrm{d}r  + \mathfrak{M}^{ij}
	\end{align*}
	for any \( s \in [0,T] \), $i\in\{1,\ldots,d\}$ and $j\in\{1,\ldots,m\}$, where \( \mathfrak{M}^{ij} \) denotes the martingale term defined in \eqref{eq:martingale_term}. 
	To estimate its \( p \)-th moment,  apply H\"older's inequality together with Remark~\ref{rem:super_linear}, yielding
	\begin{align*}
		\E \big|\rho^{ij}(\eta^n_s, x_s^n) &-\Gamma^{ij}(\eta^n_s, s, x_{\eta^n_s}^n) \big|^p \\
		&\leq C n^{-p} \sup_{t \in [0, T]}  \E\big( 
		\big|(\widehat \mu)_{\mathrm{tm}}^{n}(t, x_t^n)\big|^p + 
		\big|\Gamma(\eta_n(t), t, x_t^n)\big|^{2p}
		\big) + \E\big(|\mathfrak{M}^{ij}|^p\big)
	\end{align*}
	for any \( s \in [0,T] \), $i\in\{1,\ldots,d\}$ and $j\in\{1,\ldots,m\}$.
	Finally, invoking Corollary~\ref{cor:_coefficients_mb} and the martingale estimate \eqref{eq:estimate_martingale_term} completes the proof.
\end{proof}

\begin{lemma} \label{lem:mu}
	If  Hypotheses \mbox{\normalfont  \ref{asum:ic}}
	to \mbox{\normalfont  \ref{asum:suplin_second_derv_mu}}  are satisfied,
	then   for any $s \in [0, T]$ and $p\geq 2$ 
	\begin{align*}
		\E\Big(|z_s^n&-x_s^n|^{p-2}(z_s^n-x_s^n)\big(\mu(\tau^n_s,\,x_{\tau^n_s})-\mu(\tau^n_s,z_{s}^n)+ 	\mu(\tau^n_s,x_s^{n})-\mu(\tau^n_s,x_{\eta^n_s}^{n})\big)\Big)
		\\
		& \qquad\qquad \leq   Cn^{-p\min\{\beta+\frac{1}{2},1\}} + C\sup_{ r\in[0,s]}\E|z_r^n-x_r^n|^p
	\end{align*}
	provided that $2p(\xi+2)\leq q$, where  $C>0$ is a constant  independent of $n\in\mathbb N$.
\end{lemma}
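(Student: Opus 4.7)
The plan is to bound $\E[|Y_s|^{p-2} Y_s \mathcal{A}]$, where $Y_s := z_s^n - x_s^n$ and $\mathcal{A}$ denotes the inner drift combination, by first applying Taylor's theorem at second order to each of the two pair-differences:
\begin{align*}
\mu(\tau^n_s, x_{\tau^n_s}) - \mu(\tau^n_s, z_s^n) &= \partial_x\mu(\tau^n_s, z_s^n)(x_{\tau^n_s} - z_s^n) + \mathcal R_{\mathrm{sde}}, \\
\mu(\tau^n_s, x_s^n) - \mu(\tau^n_s, x_{\eta^n_s}^n) &= \partial_x\mu(\tau^n_s, x_{\eta^n_s}^n)(x_s^n - x_{\eta^n_s}^n) + \mathcal R_{\mathrm{scm}}.
\end{align*}
The polynomial bound $|\partial_x^2\mu(s,x)| \le L(1+|x|^{\xi-1})$ from Hypothesis~\ref{asum:suplin_second_derv_mu}, combined with H\"older's inequality and the moment estimates of Corollaries~\ref{cor:_coefficients_mb}, \ref{cor:one_step_error}, \ref{cor:one_step_regularity_sde} (together with Lemma~\ref{lem:estimate_first_term_MR} used to bound $|x_{\tau^n_s}-z_s^n|$ in $\mathscr L^p$), yields $\E|\mathcal R_{\mathrm{sde}}|^p + \E|\mathcal R_{\mathrm{scm}}|^p \le Cn^{-p}$; via Young's inequality their joint contribution to the target expectation is absorbed as $\varepsilon \sup_{r\le s}\E|Y_r|^p + Cn^{-p}$.

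For the first-order Taylor terms I would substitute the scheme representation for $x_s^n - x_{\eta^n_s}^n$ from~\eqref{eq:scm} and the decomposition $x_{\tau^n_s} - z_s^n = (x_{\tau^n_s} - x_s) + (x_s - z_s^n)$, in which the first summand is the one-step increment of~\eqref{eq:sde} on $[s\wedge\tau^n_s, s\vee\tau^n_s]$ and the second is bounded in $\mathscr L^p$ by $Cn^{-p\min\{\beta+\frac12,1\}}$ via Lemma~\ref{lem:estimate_first_term_MR}. These substitutions produce three families of contributions: drift Lebesgue integrals over an interval of length $O(1/n)$ (of $\mathscr L^p$-size $O(n^{-1})$ by Hypothesis~\ref{asum:tame} and Corollary~\ref{cor:_coefficients_mb}), the randomization-error term $\partial_x\mu(\tau^n_s,z_s^n)(x_s - z_s^n)$ controlled by Lemma~\ref{lem:estimate_first_term_MR}, and two stochastic It\^o contributions $\partial_x\mu(\tau^n_s,z_s^n)\int_s^{\tau^n_s}\rho(r,x_r)\,\mathrm{d}w_r$ and $\partial_x\mu(\tau^n_s,x_{\eta^n_s}^n)\int_{\eta^n_s}^s \Gamma(\eta^n_r,r,x_{\eta^n_r}^n)\,\mathrm{d}w_r$. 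The first two families are dispatched by H\"older and Young, yielding contributions of the form $\varepsilon\sup_{r\le s}\E|Y_r|^p + Cn^{-p\min\{\beta+\frac12,1\}}$.

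The main obstacle is the treatment of the two It\^o contributions, each of $\mathscr L^p$-size only $O(n^{-1/2})$, too large for a bare Young estimate. My plan is to introduce a Taylor expansion of $y\mapsto |y|^{p-2}y$, replacing $|Y_s|^{p-2}Y_s$ by its $\mathscr G_{\eta^n_s}$-measurable counterpart $|Y_{\eta^n_s}|^{p-2}Y_{\eta^n_s}$ plus a correction, and analogously to replace $\partial_x\mu(\tau^n_s,z_s^n)$ by $\partial_x\mu(\tau^n_s, z_{\eta^n_s}^n)$ plus a remainder controlled via Hypothesis~\ref{asum:suplin_second_derv_mu}. The leading factor then becomes measurable with respect to $\mathscr G_{\eta^n_s}\vee\sigma(\tau_{j-1})$; since $\tau_{j-1}$ is independent of the Brownian motion and the scheme-side It\^o integrand $\Gamma(\eta^n_r,r,x_{\eta^n_r}^n)$ is adapted, the corresponding expectation vanishes by conditional independence, while the quadratic corrections (products of two one-step increments) are bounded by $Cn^{-p}$ through the Burkholder--Davis--Gundy inequality, Hypothesis~\ref{asum:lip_first_derv_rho}, and Corollary~\ref{cor:one_step_error}. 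The genuine technical difficulty is the SDE-side integral $\int_s^{\tau^n_s}\rho(r,x_r)\,\mathrm{d}w_r$: its random upper endpoint $\tau^n_s$ prevents a direct martingale cancellation, and an additional Fubini step over $\tau_{j-1}$, analogous to the conditional identity employed in the proof of Lemma~\ref{lem:estimate_first_term_MR}, will be required to rewrite the integral with deterministic endpoints before the cancellation can be invoked. Collecting all contributions yields the stated bound.
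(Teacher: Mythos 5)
Your overall architecture mirrors the paper's: the paper also splits the bracket into a grid-point part plus increments (via It\^o's formula applied to $x\mapsto\mu^i(\tau^n_s,\cdot)$ rather than a pathwise Taylor expansion, which makes the derivative ride along the path inside the stochastic integrals and thereby avoids your extra step of freezing $\partial_x\mu$), isolates a finite-variation part $\mathcal A$ of size $O(n^{-p})$ and a martingale part $\mathcal M$ of size only $O(n^{-p/2})$, and then kills the leading contribution of $\mathcal M$ by replacing $|z^n_s-x^n_s|^{p-2}(z^n_s-x^n_s)$ with its value at $\eta^n_s$ and using $\E(\mathcal M\mid\mathscr G_{\eta^n_s})=0$ after integrating out $\tau_{j-1}$. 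Your handling of the Taylor remainders, of the drift Lebesgue integrals, of the randomization error via Lemma~\ref{lem:estimate_first_term_MR}, and of the random endpoint $\tau^n_s$ are all consistent with what the paper does.

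The genuine gap is your claim that the correction produced by freezing the weight, namely
$\E\big[(|Y_s|^{p-2}Y_s-|Y_{\eta^n_s}|^{p-2}Y_{\eta^n_s})\,\mathcal M\big]$ with $Y_s:=z^n_s-x^n_s$, is bounded by $Cn^{-p}$ as a ``product of two one-step increments.'' After the pointwise bound $|f(Y_s)-f(Y_{\eta^n_s})|\le C(|Y_s|^{p-2}+|Y_{\eta^n_s}|^{p-2})|Y_s-Y_{\eta^n_s}|$, the weight $|Y|^{p-2}$ consumes the fraction $\tfrac{p-2}{p}$ of the H\"older/Young budget, so the product $|Y_s-Y_{\eta^n_s}|\,|\mathcal M|$ can only be raised to the power $p/2$; with the crude one-step bounds $\E|Y_s-Y_{\eta^n_s}|^{p}\le Cn^{-p/2}$ and $\E|\mathcal M|^{p}\le Cn^{-p/2}$ this yields only $\varepsilon\sup_r\E|Y_r|^p+Cn^{-p/2}$, which destroys the target rate $n^{-p\min\{\beta+\frac12,1\}}$ for every $\beta>0$ (already for $p=2$ one gets $n^{-1}$ against a target of $n^{-1-2\beta}$). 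BDG, Hypothesis~\ref{asum:lip_first_derv_rho} and Corollary~\ref{cor:one_step_error} cannot repair this. The missing idea --- the content of the paper's estimate \eqref{eq:martingale_related_term} --- is to refuse to treat $Y_s-Y_{\eta^n_s}$ as a generic $O(n^{-1/2})$ increment and instead decompose it into (i) the drift integral of $\mu(\tau^n_r,x_{\tau^n_r})-(\widehat\mu)^n_{\mathrm{tm}}(\tau^n_r,x^n_{\eta^n_r})$, which is genuinely $O(n^{-1})$ up to terms controlled by Lemma~\ref{lem:conv_tame_drift}; (ii) the stochastic integral of $\rho(r,x_r)-\rho(r,x^n_r)$, which is proportional to the error itself and must be absorbed into $\sup_r\E|x_r-x^n_r|^p\le Cn^{-p\min\{\beta+\frac12,1\}}+C\sup_r\E|Y_r|^p$ via Lemma~\ref{lem:estimate_first_term_MR}; and (iii) the stochastic integral of $\rho(r,x^n_r)-\Gamma(\eta^n_r,r,x^n_{\eta^n_r})$, controlled by Hypothesis~\ref{asum:holder_time_diffusion_jump} and Lemma~\ref{lem:rho}. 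Each piece is then paired with $|\mathcal M|^{p/2}$ separately with appropriately weighted Young/Cauchy--Schwarz steps. Without this refinement your argument stalls at $n^{-p/2}$; note also that the resulting bound is $Cn^{-p\min\{\beta+\frac12,1\}}+C\sup_r\E|Y_r|^p$, not $Cn^{-p}$.
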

\begin{proof}
	Firstly, 	recall Equations \eqref{eq:sde}, \eqref{eq:auxiliary_equation} and  \eqref{eq:scm} to write almost surely for any $s\in[0,T]$
	\begin{align*}
		x_{s}&= x_{\eta_s^n}+\int_{\eta_s^n}^{s}  \mu\big(r,x_{r}\big)\, \mathrm{d}r +\int_{\eta_s^n}^{s}   \rho\big(r,x_{r}\big)\, \mathrm{d}w_r,
		\\
		z_{s}^{n}&= z_{\eta_s^n}^{n}+\int_{\eta_s^n}^{s}  \mu\big(\tau^n_r,x_{\tau^n_r}\big)\, \mathrm{d}r +\int_{\eta_s^n}^{s}   \rho\big(r,x_{r}\big)\, \mathrm{d}w_r, 
		\\
		x_s^{n}&= x_{\eta_s^n}^n+\int_{\eta_s^n}^{s}  (\widehat\mu)_{\mathrm{tm}}^{n}\big(\tau^n_r,x_{\tau^n_r}^{n}\big)\, \mathrm{d}r +\int_{\eta_s^n}^{s} \Gamma\big(\eta^n_r,r,x_{\eta^n_r}^{n}\big)\, \mathrm{d}w_r.    
	\end{align*}
	Then, use  It\^o's  lemma \cite[Theorem 94]{Situ2006} to the map \( x \mapsto \mu^{i}(\tau^n_s, x) \) to obtain 
	\begin{align} 
		\mu^{i}(\tau^n_s,x_{\tau^n_s})-\mu^{i}(\tau^n_s,z_{s}^n)&+ \big(\mu^{i}(\tau^n_s,x_s^{n})-\mu^{i}(\tau^n_s,x_{\eta^n_s}^{n})\big) \notag
		\\
		&=\mu^{i}(\tau^n_s,x_{\eta^n_s})-\mu^{i}(\tau^n_s,z_{\eta^n_s}^n)+\mathcal A^i+\mathcal M^i \label{eq:A_M}
	\end{align}
	for any \( s \in [0,T] \) and $i\in\{1,\ldots,d\}$,	where 
	\begin{align} 
		\mathcal A^i:&\!= \!\int_{\eta^n_s}^{\tau^n_s}\!\!\partial_x \mu^{i}\big(\tau^n_s,x_{r}\big)\mu\big(r,x_{r}\big) \, \mathrm{d}r+ \frac{1}{2}\int_{\eta^n_s}^{\tau^n_s} \!\tr\big[\partial_{x}^2 \mu^{i}\big(\tau^n_s,x_{r}\big) 
		{\rho}\big(r,x_{r}\big){\rho}^*\big(r,x_{r}\big)\big]  \, \mathrm{d}r  \notag
		\\
		&\qquad -\int_{\eta^n_s}^{s}\partial_x \mu^{i}\big(\tau^n_s,z_{r}^n\big)\mu\big(\tau^n_r,x_{\tau^n_r}\big) \, \mathrm{d}r  \notag
		\\
		&\qquad - \frac{1}{2}\int_{\eta^n_s}^{s} \tr\big[\partial_{x}^2 \mu^{i}\big(\tau^n_s,z_{r}^n\big) 
		{\rho}\big(r,x_{r}\big){\rho}^*\big(r,x_{r}\big)\big]  \, \mathrm{d}r \notag
		\\
		&\qquad+\int_{\eta^n_s}^s\partial_x \mu^{i}\big(\tau^n_s,x_{r}^{n}\big)(\widehat \mu)_{\mathrm{tm}}^{n}\big(\tau^n_r,x_{\eta^n_r}^{n}\big) \, \mathrm{d}r  \notag
		\\
		&\qquad+ \frac{1}{2}\int_{\eta^n_s}^{s} \tr\big[\partial_{x}^2 \mu^{i}\big(\tau^n_s,x_r^{n}\big) 
		{\Gamma}\big(\eta^n_r,r,x_{\eta^n_r}^{n}\big){\Gamma}^*\big(\eta^n_r,r,x_{\eta^n_r}^{n}\big)\big]  \, \mathrm{d}r  \notag
		\\
		\mathcal M^i:&= \int_{\eta^n_s}^{\tau^n_s} \partial_x \mu^{i}\big(\tau^n_s, x_{r}\big){\rho}\big(r,x_{r}\big) \, \mathrm{d}w_r-\int_{\eta^n_s}^{s} \partial_x \mu^{i}\big(\tau^n_s, z_{r}^n\big){\rho}\big(r,x_{r}\big) \, \mathrm{d}w_r
		\notag 
		\\
		&\qquad +\int_{\eta^n_s}^s \partial_x \mu^{i}\big(\tau^n_s, x_{r}^{n}\big){\Gamma}\big(\eta^n_r,r,x_{\eta^n_r}^{n}\big) \, \mathrm{d}w_r. \label{eq:M}
	\end{align}
	Further, denote by \( \mathcal{A}:= \big( \mathcal{A}^i \big)_{i \in \{1, \ldots, d\}} \), and   similarly define \( \mathcal{M} := \big( \mathcal{M}^i \big)_{i \in \{1, \ldots, d\}} \).
	Moreover, 	by  Young's inequality together with   Corollary \ref{cor:_coefficients_mb} and Remark \ref{rem:mb_auxilary_derv}
	\begin{align} \label{eq:estimate_A}
		\E|\mathcal A|^p&\leq C\sum_{i=1}^{d}\E|\mathcal A^i|^p \notag
		\\
		&\leq Cn^{-p}\sup_{t\in[0,T]} \sum_{i=1}^{d}\E\Big(|\partial_x \mu^{i}(t,x_{t})\mu(t,x_{t})|^{p}+|\partial_x^2 \mu^{i}(t,x_{t})|^p|\rho(t,x_{t})|^{2p} \notag
		\\
		&\quad+|\partial_x \mu^{i}(t,z_{t}^n)\mu(t,x_{t})|^{p}+|\partial_x^2 \mu^{i}(t,z_{t}^n)|^{p}|\rho(t,x_{t})|^{2p} \notag
		\\
		&\quad+|\partial_x \mu^{i}(t,x_{t}^n)(\widehat \mu)_{\mathrm{tm}}^{n}(t,x_{t}^{n})|^{p}+|\partial_x^2 \mu^{i}(t,x_{t}^n)|^p|{\Gamma}(\eta^n_t,t,x_{t}^{n})|^{2p}\Big) \notag
		\\
		&\leq  Cn^{-p}\sup_{t\in[0,T]} \sum_{i=1}^{d}\E\Big(|\partial_x \mu^{i}(t,x_{t})|^{2p}+|\mu(t,x_{t})|^{2p}+|\partial_x^2 \mu^{i}(t,x_{t})|^{2p} \notag
		\\
		&\quad+|\rho(t,x_{t})|^{4p}+|\partial_x \mu^{i}(t,z_{t}^n)|^{2p}+|\partial_x^2 \mu^{i}(t,z_{t}^n)|^{2p}+|\partial_x \mu^{i}(t,x_{t}^n)|^{2p}  \notag
		\\
		&\quad+|(\widehat \mu)_{\mathrm{tm}}^{n}(t,x_{t}^{n})|^{2p}+|\partial_x^2 \mu^{i}(t,x_{t}^n)|^{2p}+|{\Gamma}(\eta^n_t,t,x_{t}^{n})|^{4p}\Big)\leq Cn^{-p}.
	\end{align}
	Further, by  Martingale, H\"older's and Young's inequalities along with   Corollary \ref{cor:_coefficients_mb} and Remark \ref{rem:mb_auxilary_derv}, one obtains
	\begin{align} \label{eq:estimate_M}
		\E\big|\mathcal M\big|^p&\leq   C\E\Big(\int_{\eta^n_s}^{s\vee\tau^n_s} \big(\,\big|\partial_x \mu(\tau^n_s, x_{r}){\rho}(r,x_{r})\big|^2+ \big|\partial_x \mu(\tau^n_s, z_{r}^n){\rho}(r,x_{r})\big|^2  \notag
		\\
		&\qquad\qquad+ \big| \partial_x \mu(\tau^n_s, x_{r}^{n}){\Gamma}(\eta^n_r,r,x_{\eta^n_r}^{n})\big|^2\big )  \, \mathrm{d}r\Big)^{p/2} \notag
		\\
		&\leq  Cn^{-\frac{p}{2}}\sup_{t\in[0,T]} \E\Big(|\partial_x \mu(t, x_{t})|^{2p}+|{\rho}(t,x_{t})|^{2p}+|\partial_x \mu(t, z_{t}^n)|^{2p}  \notag
		\\
		&\qquad\qquad+ |\partial_x \mu(t, x_{t}^{n})|^{2p}+|{\Gamma}(\eta^n_t,t,x_{t}^{n})|^{2p}\Big)
		\leq Cn^{-\frac{p}{2}} 
	\end{align}
	for any $s\in[0,T]$.
	Furthermore, recall the scheme \eqref{eq:scm} and the auxiliary equation \eqref{eq:auxiliary_equation} to write
	\begin{align*}
		z_s^n-x_s^n&=z_{\eta^n_s}^n-x_{\eta^n_s}^n
		+\int_{\eta^n_s}^s\big(\mu(\tau_r^n,x_{\tau^n_r})-(\widehat {\mu})_{\mathrm{tm}}^{\displaystyle n}(\tau_r^n,x_{\eta^n_r}^{n})\big)\, \mathrm{d}r\notag
		\\
		&
		\quad+\int_{\eta^n_s}^s\big(\rho(r,x_r)-\rho(r,x_r^{n})\big)\, \mathrm{d}w_r
		+\int_{\eta^n_s}^s\big(\rho(r,x_r^{n})-{\Gamma}(\eta^n_r,r,x_{\eta^n_r}^{n})\big)\, \mathrm{d}w_r \notag
	\end{align*}
	for any $s\in[0,T]$.
	Then, apply 
	Young's, H\"older's  and martingale inequalities together with the estimate \eqref{eq:estimate_M}  to obtain the following
	\begin{align} \label{eq:martingale_related_term}
		\E  \Big(\big|(z_s^n-x_s^n)-&(z_{\eta^n_s}^n-x_{\eta^n_s}^n)\big|^{p/2}|\mathcal M|^{p/2}\Big) \notag
		\\
		&
		\leq C\E\Big(\Big|\int_{\eta^n_s}^s\big(\mu(\tau_r^n,x_{\tau^n_r})-(\widehat {\mu})_{\mathrm{tm}}^{\displaystyle n}(\tau_r^n,x_{\eta^n_r}^{n})\big)\, \mathrm{d}r\Big|^{p/2}|\mathcal M|^{p/2}\Big)\notag
		\\
		&\quad
		+C\E\Big(\Big|\int_{\eta^n_s}^s\big(\rho(r,x_r)-\rho(r,x_r^{n})\big)\, \mathrm{d}w_r\Big|^{p/2} |\mathcal M|^{p/2}\Big)  \notag
		\\
		&\quad+C\E\Big(\Big|\int_{\eta^n_s}^s\big(\rho(r,x_r^{n})-{\Gamma}(\eta^n_r,r,x_{\eta^n_r}^{n})\big)\, \mathrm{d}w_r\Big|^{p/2}|\mathcal M|^{p/2}\Big) \notag
		\\
		&\leq   Cn^{\frac{p}{2}}\E\Big|\int_{\eta^n_s}^s\mu(\tau_r^n,x_{\tau^n_r})-(\widehat {\mu})_{\mathrm{tm}}^{\displaystyle n}(\tau_r^n,x_{\eta^n_r}^{n})\, \mathrm{d}r\Big|^p+ Cn^{-\frac{p}{2}}\E|\mathcal M|^p \notag
		\\
		&
		\quad+C\Big(\E\Big|\int_{\eta^n_s}^s\big(\rho(r,x_r)-\rho(r,x_r^{n})\big)\, \mathrm{d}w_r\Big|^{p}\Big)^{1/2}(\E|\mathcal M|^{p})^{1/2} 
		\notag
		\\
		&
		\quad+C\Big(\E\Big|\int_{\eta^n_s}^s\big(\rho(r,x_r^{n})-{\Gamma}(\eta^n_r,r,x_{\eta^n_r}^{n})\big)\, \mathrm{d}w_r\Big|^p\Big)^{1/2}(\E|\mathcal M|^p)^{1/2} 
		\notag
		\\
		&\leq Cn^{-\frac{p}{2}+1}\E\int_{\eta^n_s}^s\big|\mu(\tau_r^n,x_{\tau^n_r})-(\widehat {\mu})_{\mathrm{tm}}^{\displaystyle n}(\tau_r^n,x_{\eta^n_r}^{n})\big|^p \, \mathrm{d}r +Cn^{-p} \notag
		\\
		&
		\quad+Cn^{-\frac{p}{4}}\Big(n^{-\frac{p}{2}+1}\E\int_{\eta^n_s}^s\big|\rho(r,x_r)-\rho(r,x_r^{n})\big|^p\, \mathrm{d}r\Big)^{1/2} \notag
		\\
		&
		\quad+Cn^{-\frac{p}{4}}\Big(n^{-\frac{p}{2}+1}\E\int_{\eta^n_s}^s\big|\rho(r,x_r^n)-\rho(\eta^n_r,x_r^{n})\big|^p\, \mathrm{d}r\Big)^{1/2}  \notag
		\\
		&
		\quad	+Cn^{-\frac{p}{4}}\Big(n^{-\frac{p}{2}+1}\E\int_{\eta^n_s}^s\big|\rho(\eta^n_r,x_r^{n})-{\Gamma}(\eta^n_r,r,x_{\eta^n_r}^{n})\big|^p\, \mathrm{d}r\Big)^{1/2} 
	\end{align}
	for any $t\in[0,T]$. Further, by using Lemmas  \ref{lem:conv_tame_drift}, \ref{lem:rho} and  {Hypotheses \ref{asum:monotonocity}, \ref{asum:holder_time_diffusion_jump}} along with Young's inequality and Lemma \ref{lem:estimate_first_term_MR},  one obtains, 	for any $s\in[0,T]$ 
	\begin{align*}
		\E  \Big(\big|(z_s^n&-x_s^n)-(z_{\eta^n_s}^n-x_{\eta^n_s}^n)\big|^{p/2}|\mathcal M|^{p/2}\Big)
		\\
		&\leq Cn^{-\frac{p}{2}}n^{-p\min\{\beta,\frac{1}{2}\}}+ Cn^{-\frac{p}{2}}n^{\frac{p}{2}}\sup_{ r\in[0,s]}\E|z_r^n-x_r^n|^p+ Cn^{-p} \notag
		\\
		&\quad+Cn^{-\frac{p}{2}}\sup_{ r\in[0,s]}\bigl(\E|x_r-x_r^n|^p\bigr)^{1/2}             \notag
		\\
		&\leq  Cn^{-p\min\{\beta+\frac{1}{2},1\}}+ C\sup_{ r\in[0,s]}\E|z_r^n-x_r^n|^p+Cn^{-{p}}+ C\sup_{ r\in[0,s]}\E|x_r-x_r^n|^p  \notag
		\\
		&\leq  Cn^{-p\min\{\beta+\frac{1}{2},1\}}+ C\sup_{ r\in[0,s]}\E|x_r-z_r^n|^p + C\sup_{ r\in[0,s]}\E|z_r^n-x_r^n|^p \notag
		\\
		&\leq  Cn^{-p\min\{\beta+\frac{1}{2},1\}}+ C\sup_{ r\in[0,s]}\E|z_r^n-x_r^n|^p. 
	\end{align*}
	For further use, define \( f(z) = |z|^{p-2} z \) for \( z \in \mathbb{R}^d \) and \( p \geq 2 \). Then, for all \( x, y \in \mathbb{R}^d \) and any \( \theta \in (0,1) \), there exists a constant \( C = C(p) > 0 \) such that
	\[
	| f(x) - f(y) | 
	\leq C (p - 1) \left| \theta x + (1 - \theta) y \right|^{p-2} | x - y |
	\leq C \left( |x|^{p-2} + |y|^{p-2} \right)| x - y |.
	\]
	One utilizes the above identity along with Young's inequality  and recall $\mathcal M$ from  \eqref{eq:M}  to proceed as follows
	\begin{align}
		\E\Big(\, &|z_s^n - x_s^n|^{p-2}(z_s^n - x_s^n)\mathcal M\,\Big)
		\notag
		\\
		&=\E\Big(\, |z_s^n - x_s^n|^{p-2}(z_s^n - x_s^n)\mathcal M - |z_{\eta_s^n}^n - x_{\eta_s^n}^n|^{p-2}(z_{\eta_s^n}^n - x_{\eta_s^n}^n)\mathcal M \,\Big) \notag
		\\
		&\qquad\qquad\qquad+\E\Big(\,|z_{\eta_s^n}^n - x_{\eta_s^n}^n|^{p-2}(z_{\eta_s^n}^n - x_{\eta_s^n}^n)\mathcal M \,\Big) \notag
		\\
		&\leq \E\Big|\, |z_s^n - x_s^n|^{p-2}(z_s^n - x_s^n)\mathcal M - |z_{\eta_s^n}^n - x_{\eta_s^n}^n|^{p-2}(z_{\eta_s^n}^n - x_{\eta_s^n}^n)\mathcal M \,\Big| \notag
		\\
		&\qquad\qquad\qquad+\E\Big(\,|z_{\eta_s^n}^n - x_{\eta_s^n}^n|^{p-2}(z_{\eta_s^n}^n - x_{\eta_s^n}^n)\E\big(\mathcal M|\mathcal G_{\eta^n_s}\big) \,\Big) \notag
		\\
		&\leq C\E\Big(\big( |z_s^n - x_s^n|^{p-2} + |z_{\eta_s^n}^n - x_{\eta_s^n}^n|^{p-2} \big)
		\big| (z_s^n - x_s^n)- (z_{\eta_s^n}^n - x_{\eta_s^n}^n) \big||\mathcal M|\Big)\notag
		\\
		&\leq C\E\big( |z_s^n - x_s^n|^{p} + |z_{\eta_s^n}^n - x_{\eta_s^n}^n|^{p} \big)+
		C\E  \Big(\big|(z_s^n-x_s^n)-(z_{\eta^n_s}^n-x_{\eta^n_s}^n)\big|^{p/2}|\mathcal M|^{p/2}\Big)\notag
		\\
		&\leq    Cn^{-p\min\{\beta+\frac{1}{2},1\}}+ C\sup_{ r\in[0,s]}\E|z_r^n-x_r^n|^p \label{eq:final_martingale_estimate}
	\end{align}
	for any $s\in[0,T]$, where the last step follows the estimate driven in \eqref{eq:martingale_related_term}.
	
	Finally, to estimate the main inequality of this lemma, recall \eqref{eq:A_M} and apply Young's and H\"older's inequalities together with Hypothesis \ref{asum:poly_lip_drift} and Equations \eqref{eq:estimate_A}, \eqref{eq:final_martingale_estimate} as follows
	\begin{align*}
		\E\Big(|z_s^n&-x_s^n|^{p-2}(z_s^n-x_s^n)\big(\mu(\tau^n_s,\,x_{\tau^n_s})-\mu(\tau^n_s,z_{s}^n)+ 	\mu(\tau^n_s,x_s^{n})-\mu(\tau^n_s,x_{\eta^n_s}^{n})\big)\Big)
		\\
		&=\,\E\Big(|z_s^n-x_s^n|^{p-2}(z_s^n-x_s^n)\Big[\mu(\tau^n_s,x_{\eta^n_s})-\mu(\tau^n_s,z_{\eta^n_s}^n)+\mathcal A+\mathcal M\Big]\Big)
		\\
		&\leq  C\E|z_s^n-x_s^n|^p+C\E|\mu(\tau^n_s,x_{\eta^n_s})-\mu(\tau^n_s,z_{\eta^n_s}^n)|^p
		\\
		&\qquad+C\E|\mathcal A|^p+\E\big(|z_s^n-x_s^n|^{p-2}(z_s^n-x_s^n)\mathcal M\big)
		\\
		&\leq     C\sup_{ r\in[0,s]}\E|z_r^n-x_r^n|^p+C\E\big(1+|x_{\eta^n_s}|^{p\xi}+|z_{\tau^n_s}^n|^{p \xi}\big)|x_{\eta^n_s}-z_{\eta^n_s}^n|^{p}
		\\
		&\qquad+Cn^{-p\min\{\beta+\frac{1}{2},1\}}
		\\
		&\leq    Cn^{-p\min\{\beta+\frac{1}{2},1\}}+ C\sup_{ r\in[0,s]}\E|z_r^n-x_r^n|^p
		\\
		&\qquad+C\Big(\E\big(1+|x_{\eta^n_s}|^{2p\xi}+|z_{\tau^n_s}^n|^{2p \xi}\big)\Big)^{1/2}\Big(\E|x_{\eta^n_s}-z_{\eta^n_s}^n|^{2p}\Big)^{1/2}
		\\
		&\leq    Cn^{-p\min\{\beta+\frac{1}{2},1\}}+ C\sup_{ r\in[0,s]}\E|z_r^n-x_r^n|^p
	\end{align*}
	for any $s\in[0,T]$, where the last step follows from Lemmas \ref{lem:scm_mb} and \ref{lem:estimate_first_term_MR}
\end{proof}

The following lemma is a key ingredient in deriving the main result.
\begin{lemma} \label{lem:estimate_last_term_MR}
	Let	Hypotheses \mbox{\normalfont  \ref{asum:ic}}
	to \mbox{\normalfont  \ref{asum:suplin_second_derv_mu}}  hold.
	Then,  
	\begin{align*}
		&\sup_{ t\in[0,T]}\E|z_t^n-x_t^{n}|^{p}
		\leq  Cn^{-p\min\{\beta+\frac{1}{2},1\}}
	\end{align*}
	provided that $2p(\xi+2)\leq q$, where  $C>0$ is a constant  independent of $n\in\mathbb N$.
\end{lemma}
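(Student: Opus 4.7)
The plan is to apply Itô's formula to the map $z \mapsto |z|^p$ evaluated at the difference $z_t^n - x_t^n$, take expectation to eliminate the local martingale contribution, and close the resulting estimate via Grönwall's inequality. Subtracting \eqref{eq:scm} from \eqref{eq:auxiliary_equation} gives
\begin{align*}
z_t^n - x_t^n = \int_0^t \bigl(\mu(\tau_s^n, x_{\tau_s^n}) - (\widehat\mu)_{\mathrm{tm}}^n(\tau_s^n, x_{\eta_s^n}^n)\bigr) \, \mathrm{d}s + \int_0^t \bigl(\rho(s, x_s) - \Gamma(\eta_s^n, s, x_{\eta_s^n}^n)\bigr) \, \mathrm{d}w_s,
\end{align*}
so the Itô expansion of $|z_t^n - x_t^n|^p$ (whose Hessian is controlled by $Cp(p-1)|\cdot|^{p-2}$) produces, after expectation, a drift term paired against $|z_s^n - x_s^n|^{p-2}(z_s^n - x_s^n)$ and a quadratic-variation term bounded (up to constants) by $\E\!\int_0^t |z_s^n - x_s^n|^{p-2} |\rho(s, x_s) - \Gamma(\eta_s^n, s, x_{\eta_s^n}^n)|^2 \, \mathrm{d}s$.

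For the drift contribution I would use the four-piece decomposition
\begin{align*}
\mu(\tau_s^n, x_{\tau_s^n}) &- (\widehat\mu)_{\mathrm{tm}}^n(\tau_s^n, x_{\eta_s^n}^n) \\
&= \bigl[\mu(\tau_s^n, x_{\tau_s^n}) - \mu(\tau_s^n, z_s^n)\bigr] + \bigl[\mu(\tau_s^n, z_s^n) - \mu(\tau_s^n, x_s^n)\bigr] \\
&\quad + \bigl[\mu(\tau_s^n, x_s^n) - \mu(\tau_s^n, x_{\eta_s^n}^n)\bigr] + \bigl[\mu(\tau_s^n, x_{\eta_s^n}^n) - (\widehat\mu)_{\mathrm{tm}}^n(\tau_s^n, x_{\eta_s^n}^n)\bigr].
\end{align*}
The second bracket, once inner-producted with $|z_s^n - x_s^n|^{p-2}(z_s^n - x_s^n)$, is absorbed by the monotonicity in Hypothesis \ref{asum:monotonocity} into a term of order $L|z_s^n - x_s^n|^p$. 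The first and third brackets are handled jointly by Lemma \ref{lem:mu}, which supplies both the $n^{-p\min\{\beta+1/2,1\}}$ contribution and a further Grönwall term. The fourth bracket is dispatched via Young's inequality combined with Hypothesis \ref{asum:convergence}, yielding $C|z_s^n - x_s^n|^p + Cn^{-p}$ after Jensen's inequality (admissible since $2p(\xi+2)\leq q$).

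For the quadratic-variation term, Young's inequality converts $|z_s^n - x_s^n|^{p-2}|\rho - \Gamma|^2$ into a $|z_s^n - x_s^n|^p$ Grönwall piece plus $C|\rho - \Gamma|^p$, and the latter is split as
\begin{align*}
\rho(s, x_s) - \Gamma(\eta_s^n, s, x_{\eta_s^n}^n) = \bigl[\rho(s, x_s) - \rho(s, x_s^n)\bigr] + \bigl[\rho(s, x_s^n) - \rho(\eta_s^n, x_s^n)\bigr] + \bigl[\rho(\eta_s^n, x_s^n) - \Gamma(\eta_s^n, s, x_{\eta_s^n}^n)\bigr].
\end{align*}
The first piece is bounded using Hypothesis \ref{asum:monotonocity} together with the triangle inequality $|x_s - x_s^n|^p \leq C|x_s - z_s^n|^p + C|z_s^n - x_s^n|^p$ and Lemma \ref{lem:estimate_first_term_MR}; the second piece is bounded by $Cn^{-p(\beta+1/2)}$ via Hypothesis \ref{asum:holder_time_diffusion_jump} and the linear growth of $\rho$ from Remark \ref{rem:super_linear}; and the third piece is bounded by $Cn^{-p}$ via Lemma \ref{lem:rho}.

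Collecting all the contributions yields an inequality of the form
\begin{align*}
\E|z_t^n - x_t^n|^p \leq C n^{-p\min\{\beta+1/2,1\}} + C \int_0^t \sup_{r \in [0,s]} \E|z_r^n - x_r^n|^p \, \mathrm{d}s,
\end{align*}
and a standard Grönwall argument then delivers the claimed bound uniformly in $t \in [0,T]$. The principal difficulty is already absorbed into Lemma \ref{lem:mu}: without the delicate Itô-based cancellation it provides for the grouped first and third drift brackets, the randomization and spatial-evaluation mismatch would contribute only $n^{-p\beta}$, preventing the scheme from surpassing the classical tamed Milstein rate. With that lemma in hand, the remaining work is a careful bookkeeping exercise to ensure that every fragment respects the $n^{-p\min\{\beta+1/2,1\}}$ budget.
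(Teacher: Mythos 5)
Your proposal is correct and follows essentially the same route as the paper: It\^o's formula on $|z_t^n-x_t^n|^p$, the identical four-piece drift decomposition with Lemma~\ref{lem:mu} absorbing the grouped first and third brackets, the analogous splitting of $\rho(s,x_s)-\Gamma(\eta_s^n,s,x_{\eta_s^n}^n)$ handled by Hypotheses~\ref{asum:monotonocity}, \ref{asum:holder_time_diffusion_jump} and Lemmas~\ref{lem:rho}, \ref{lem:estimate_first_term_MR}, and a concluding Gr\"onwall argument. The only omission is the trivial case $p<2$, which the paper dispatches in one line via H\"older's inequality.
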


\begin{proof}
	Firstly, recall the scheme and the  auxiliary system from \eqref{eq:scm} and  \eqref{eq:auxiliary_equation}, respectively. Then, for the case \(p\geq 2\), utilize  It\^o's  lemma \cite[Theorem 94]{Situ2006} to the map \( x \mapsto |x|^p \) to obtain the following
	\begin{align} 
		|z_t^n-x_t^{n}|^{p}&\leq {p}\int_{0}^{t}|z_s^n-x_s^{n}|^{{p}-2}(z_s^n-x_s^{n})\big(\mu(\tau^n_s,x_{\tau^n_s})-(\widehat\mu)_{\mathrm{tm}}^{n}\big(\tau^n_s,x_{\eta^n_s}^{n}\big)\big)\,\mathrm{d}s  \notag
		\\
		&\qquad+p\int_{0}^{t}|z_s^n-x_s^{n}|^{{p}-2}(z_s^n-x_s^{n})\big(\rho(s, x_s)-  \Gamma(\eta^n_s,s,x_{\eta^n_s}^{n})\big)\,\mathrm{d}w_s  \notag
		\\
		& \qquad+\tfrac{p(p-1)}{2}\int_{0}^{t}|z_s^n-x_s^{n}|^{{p}-2}\big|\rho(s, x_s)-  \Gamma(\eta^n_s,s,x_{\eta^n_s}^{n})\big|^2 \,\mathrm{d}s \notag 
	\end{align}
	almost surely for any $t\in[0,T]$, which on taking expectations of both sides and invoking Young's inequality yields 
	\begin{align} \label{eq:error_y_n_x_n}
		\E|z_t^n-x_t^{n}|^{p}
		&\leq 
		p\E\int_{0}^{t}|z_s^n-x_s^{n}|^{{p}-2}(z_s^n-x_s^{n})\big(\mu(\tau^n_s, x_{\tau^n_s})-(\widehat\mu)_{\mathrm{tm}}^{n}\big(\tau^n_s,x_{\eta^n_s}^{n}\big)\big)\,\mathrm{d}s \notag
		\\
		&
		\quad+C\E\int_{0}^{t}|z_s^n-x_s^{n}|^{{p}}\,\mathrm{d}s 
		+C\E\int_{0}^{t}\big|\rho(s, x_s)-  \Gamma(\eta^n_s,s,x_{\eta^n_s}^{n})\big|^p\,\mathrm{d}s \notag
		\\
		&=:  \, \mathscr{T}_\mu +\mathscr{T}_\rho.
	\end{align}
	for any $t\in[0,T]$. 
	To estimate the first term $\mathscr{T}_\mu$ in \eqref{eq:error_y_n_x_n}, one makes use of Lemma \ref{lem:mu}, Hypotheses~\ref{asum:monotonocity} and~\ref{asum:convergence}, together with Young's inequality as follows
	\begin{align}
		\mathscr{T}_\mu: &= p\E\int_{0}^{t}|z_s^n-x_s^{n}|^{{p}-2}(z_s^n-x_s^{n})\big(\mu(\tau^n_s, x_{\tau^n_s})-(\widehat\mu)_{\mathrm{tm}}^{n}\big(\tau^n_s,x_{\eta^n_s}^{n}\big)\big)\,\mathrm{d}s \notag
		\\
		&\leq  p\E\int_{0}^{t}|z_s^n-x_s^{n}|^{{p}-2}(z_s^n-x_s^{n})\big(\mu(\tau^n_s,x_{\tau^n_s})-\mu( \tau^n_s, z_{s}^n)\big)\,\mathrm{d}s \notag
		\\
		&\quad+ p\E\int_{0}^{t}|z_s^n-x_s^{n}|^{{p}-2}(z_s^n-x_s^{n})\big(\mu( \tau^n_s, z_{s}^n)-\mu(\tau^n_s,  x_{s}^n)\big)\,\mathrm{d}s \notag
		\\
		&\quad+p\E\int_{0}^{t}|z_s^n-x_s^{n}|^{{p}-2}(z_s^n-x_s^{n})\big(\mu(\tau^n_s, x_{s}^n)-\mu(\tau^n_s, x_{\eta^n_s}^n)\big)\,\mathrm{d}s \notag
		\\
		&\quad+p\E\int_{0}^{t}|z_s^n-x_s^{n}|^{{p}-2}(z_s^n-x_s^{n})\big(\mu(\tau^n_s, x_{\eta^n_s}^n)-(\widehat\mu)_{\mathrm{tm}}^{n}\big(\tau^n_s,x_{\eta^n_s}^{n}\big)\big)\,\mathrm{d}s \notag
		\\
		&\leq Cn^{-p\min\{\beta+\frac{1}{2},1\}}  +C  \int_0^t\sup_{ r\in[0,s]}\E|z_r^n-x_r^{n}|^{p}\,\mathrm{d}s 
		\notag
		\\
		&\quad+C\E\int_{0}^{t}\big|\mu(\tau^n_s, x_{\eta^n_s}^n)-(\widehat\mu)_{\mathrm{tm}}^{n}\big(\tau^n_s,x_{\eta^n_s}^{n}\big)\big|^p\,\mathrm{d}s \notag
		\\
		&\leq Cn^{-p\min\{\beta+\frac{1}{2},1\}}  +C  \int_0^t\sup_{ r\in[0,s]}\E|z_r^n-x_r^{n}|^{p}\,\mathrm{d}s   
		\label{eq:Tmu}
	\end{align}
	for any $t\in[0,T]$.
	To estimate the second term $\mathscr{T}_\rho$ in \eqref{eq:error_y_n_x_n}, one applies  Hypotheses~\ref{asum:monotonocity}, \ref{asum:holder_time_diffusion_jump} along with  Lemma~\ref{lem:rho}, \ref{lem:estimate_first_term_MR} yielding the following for any $t \in [0, T]$
	\begin{align} \label{eq:Trho}
		\mathscr{T}_\rho: &= C\E\int_{0}^{t}|z_s^n-x_s^{n}|^{{p}}\,\mathrm{d}s 
		+C\E\int_{0}^{t}\big|\rho(s, x_s)-  \Gamma(\eta^n_s,s,x_{\eta^n_s}^{n})\big|^p\,\mathrm{d}s \notag
		\\
		&\leq
		C\E\int_{0}^{t}|z_s^n-x_s^{n}|^{{p}}\,\mathrm{d}s 
		+C\E\int_{0}^{t}\big|\rho(s, x_s)-\rho(\eta^n_s, x_s)\big|^p\,\mathrm{d}s	\notag
		\\
		&\qquad+C\E\int_{0}^{t}\big|\rho(\eta^n_s, x_s)-\rho(\eta^n_s, z_s^n)\big|^p\,\mathrm{d}s+C\E\int_{0}^{t}\big|\rho(\eta^n_s, z_s^n)-\rho(\eta^n_s, x_s^n)\big|^p\,\mathrm{d}s \notag
		\\
		&\qquad+
		C\E\int_{0}^{t}\big|\rho(\eta^n_s, x_s^n)-  \Gamma(\eta^n_s,s,x_{\eta^n_s}^{n})\big|^p\,\mathrm{d}s \notag
		\\
		&\leq Cn^{-p\min\{\beta+\frac{1}{2},1\}} +C\E\int_0^t|x_s-z_s^{n}|^{p}\,\mathrm{d}s +C\E\int_0^t|z_s^n-x_s^{n}|^{p}\,\mathrm{d}s  \notag
		\\
		&\leq Cn^{-p\min\{\beta+\frac{1}{2},1\}} +C \int_{0}^{t}\sup_{ r\in[0,s]}\E|z_r^n-x_r^{n}|^{p}\,\mathrm{d}s.
	\end{align}
	Substituting \eqref{eq:Tmu} and \eqref{eq:Trho} into \eqref{eq:error_y_n_x_n}, and applying {Gr\"onwall's} inequality, one concludes the proof.
	
	For the case \(  p< 2  \), one uses H\"older's inequality.
\end{proof}
\begin{proof}[Proof of Theorem \ref{thm:main_result}]
	For any \( t \in [0, T] \),  add and subtract the auxiliary process given in \eqref{eq:auxiliary_equation} to obtain
	\begin{align*}
		\E|x_t - x_t^{n}|^{p} \leq C\,\E|x_t - z_t^{n}|^{p} + C\,\E|z_t^{n} - x_t^{n}|^{p}.
	\end{align*}
	Applying Lemmas~\ref{lem:estimate_first_term_MR} and~\ref{lem:estimate_last_term_MR} to the terms on the right-hand side yields the desired result.	
\end{proof}

\section{Numerical Experiment} \label{sec:numerics}
In this section, the theoretical results are corroborated by numerical simulations for the stochastic FitzHugh--Nagumo system~\eqref{eq:sde-system}, which naturally falls within the scope of the analytical framework.

Over the time interval $t \in [0,1]$, the parameters of the SDE \eqref{eq:sde-system} are specified as follows
\[
I_{\mathrm{ext}}(t) = 25\bigl(1 - \sqrt{t}\bigr), \,\,
V_0 = 2, \,\,
R_0 = -1, \,\,
\sigma = 0.001, \,\,
\alpha = 0.8, \,\,
\gamma = 0.7, \,\,
\lambda = 0.8.
\]
Further,
recall the uniform time grid \( \{t_j\}_{j=0}^n \) on the interval \([0,1]\) with step size 
\(\delta t = 1 / n\), and define the  randomized evaluation time by
\[
\tau_j := t_j + \tau_j \,\delta t, 
\quad \tau_j \sim \mathcal{U}(0,1), 
\quad \text{for } j = 0,1,\ldots,n-1.
\]
Then, the \emph{randomized-tamed Milstein scheme}  for the SDE \eqref{eq:sde-system}  advances the solution as
\begin{equation} \label{eq:tamed-milstein}
	\begin{aligned}
		V_{t_{j+1}} &= V_{t_j} 
		+ \frac{1}{\mathcal{T}_j} \Big( V_{t_j} - \tfrac{V_{t_j}^3}{3}\Big) \delta t - \Big(R_{t_j} - I_{\mathrm{ext}}(\tau_j) \Big) \delta t 
		\\&\qquad\qquad+ \sigma\, V_{t_j} \,\delta w_j 
		+ \frac{1}{2} \sigma^2 V_{t_j} \left( (\delta w_j)^2 - \delta t \right), \\[0.6em]
		R_{t_{j+1}} &= R_{t_j} 
		+ \alpha \left( V_{t_j} + \gamma - \lambda R_{t_j} \right) \delta t,  
		\quad \text{for } j=0,1,\ldots,n-1
	\end{aligned}
\end{equation}
where  \,
\(
\delta w_j := w_{t_{j+1}} - w_{t_j} \,\, \mbox{ and }
\,\, 
\mathcal{T}_j := 1 + n^{-1} |V_{t_j}|^{2\xi}, 
\,\, 
\xi = 2.
\) Here, $\mathcal{T}_j$, defined in equation~\eqref{eq:taming_drift}, acts as a taming denominator in the nonlinear drift, controlling the super-linear growth of $V_{t_j}$.


\sloppy
	The reference solution is obtained using the randomized--tamed Milstein scheme \eqref{eq:tamed-milstein} 
	with a time step of \(2^{-18}\). 
	All results reported in the subsequent figure are based on 2000 independent simulation runs.

%
\begin{figure}[!htbp]
	\centering
	\includegraphics[width=0.55\textwidth,height=0.45\textwidth]{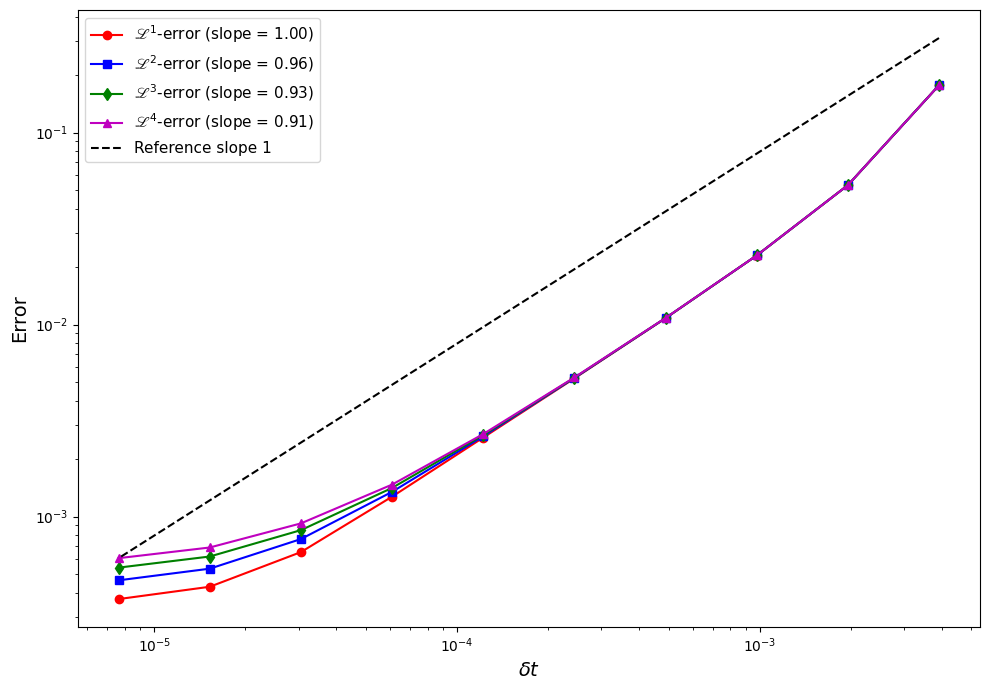}
	\caption{\textit{$\mathscr{L}^p$-error of the scheme \eqref{eq:tamed-milstein} for the SDE \eqref{eq:sde-system}.}}
\end{figure}

The numerical results exhibit convergence rates that are in close agreement with the theoretical prediction of order \(1.0\), thereby offering additional confirmation of Theorem~\ref{thm:main_result}.

\section*{Acknowledgements}
\sloppy
	This research was 
	funded by Centro de Modelamiento Matem\'atico (CMM) FB210005, and BASAL funds for centers of excellence from ANID-Chile. 



\begin{thebibliography}{10}
		
		\bibitem{Baladron2012}
		Baladron, J., Fasoli, D., Faugeras, O. and Touboul, J. (2012). Mean-field description and propagation of chaos in networks of Hodgkin--Huxley and FitzHugh--Nagumo neurons, \emph{J. Math. Neurosci.} 2(1).
		
		\bibitem{Biswas2022}
		Biswas, S., Kumar, C., Neelima, Reis, G. dos, Reisinger, C. (2024). An explicit Milstein-type scheme for interacting particle systems and McKean--Vlasov SDEs with common noise and non-differentiable drift coefficients, \emph{Ann. Appl. Probab.} 34(2) 2326--2363.
		
		\bibitem{Chen2019}
		Chen, Z., Gan, S., Wang, X. (2019). Mean-square approximations of L\'evy noise driven SDEs with superlinearly growing diffusion and jump coefficients, \emph{Discrete Contin. Dyn. Syst. Ser. B} 24(8) 4513--4545.
		
		\bibitem{Cont2004}
		Cont, R., Tankov, P. (2003). Financial Modelling with Jump Processes. Chapman and Hall/CRC.
		
		\bibitem{Dareiotis2016}
		Dareiotis, K., Kumar, C., Sabanis, S. (2016). On tamed Euler approximations of SDEs driven by L\'evy noise with applications to delay equations, \emph{SIAM J. Numer. Anal.} 54(3) 1840--1872.
		
		\bibitem{Dereich2011}
		Dereich, S., Heidenreich, F. (2011). A multilevel Monte Carlo algorithm for L\'evy-driven stochastic differential equations, \emph{Stochastic Process. Appl.} 121(7) 1565--1587.
		
		\bibitem{Higham2006}
		Higham, D. J., Kloeden, P. E. (2006). Convergence and Stability of Implicit Methods for Jump-diffusion Systems, \emph{Int. J. Numer. Anal. Model.} 3(2) 125--140.
		
			\bibitem{hutzenthaler2015}
		Hutzenthaler, M., Jentzen, A. (2015). Numerical approximations of stochastic differential equations with non globally Lipschitz continuous coefficients, \emph{Mem. Amer. Math. Soc.} 236(1112).
		
		\bibitem{hutzenthaler2020}
		Hutzenthaler, M., Jentzen, A. (2020). On a perturbation theory and on strong convergence rates for stochastic ordinary and partial differential equations with non-globally monotone coefficients, \emph{Ann. Probab.} 48(1) 53--93.
		
		\bibitem{hutzenthaler2010}
		Hutzenthaler, M., Jentzen, A., Kloeden, P. E. (2010). Strong and weak divergence in finite time of Euler’s method for stochastic differential equations with non-globally Lipschitz continuous coefficients, \emph{Proc. Roy. Soc. Edinburgh Sect. A} 467(2130) 1563--1576.
		
		\bibitem{hutzenthaler2012}
		Hutzenthaler, M., Jentzen, A., Kloeden, P. E. (2012). Strong convergence of an explicit numerical method for SDEs with nonglobally Lipschitz continuous coefficients, \emph{Ann. Appl. Probab.} 22(4) 1611--1641.
		
	
		
		\bibitem{Kloeden1992}
		Kloeden, P. E., Platen, E. (1992). Numerical Solution of Stochastic Differential Equations, Applications of Mathematics. Springer-Verlag, Berlin 23
		
		\bibitem{Kruse2019}
		Kruse, R., Wu, Y. (2019). A randomized Milstein method for stochastic differential equations with non-differentiable drift coefficients, \emph{Discrete Contin. Dyn. Syst. Ser. B} 24(8) 3475--3502.
		
			\bibitem{Kumar2021a}
		Kumar, C. (2021). On Milstein-type scheme for SDE driven by L\'evy noise with superlinear coefficients, \emph{Discrete Contin. Dyn. Syst. Ser. B} 26(3) 1405--1446.
		
		\bibitem{Kumar2017a}
		Kumar, C., Sabanis, S. (2017). On explicit approximations for L\'evy driven SDEs with super-linear diffusion coefficients, \emph{Electron. J. Probab.} 22(73) 1--19.
		
		\bibitem{Kumar2017}
		Kumar, C., Sabanis, S. (2017). On Tamed Milstein Schemes of SDEs Driven by L\'evy Noise, \emph{Discrete Contin. Dyn. Syst. Ser. B} 22(2) 421--463.
		
		\bibitem{Kumar2020}
		Kumar, C., Sabanis, S. (2019). On Milstein approximations with varying coefficients: the case of super-linear diffusion coefficients, \emph{BIT} 59(4) 929--968.
		
	
		
		\bibitem{Mao2007}
		Mao, X. (2007). Stochastic Differential Equations and Applications, 2nd ed. Horwood Publishing Limited, Chichester.
		
		\bibitem{Pawel2021}
		Morkisz, P. M., Przyby{\l}owicz, P. (2021). Randomized derivative-free Milstein algorithm for efficient approximation of solutions of SDEs under noisy information, \emph{J. Comput. Appl. Math.} 383 1--10.
		
		\bibitem{Oksendal2007}
		\O ksendal, B., Sulem, A. (2007). Applied Stochastic Control of Jump Diffusions, 2nd edition, Springer, Berlin.
		
		\bibitem{Platen1999}
		Platen, E. (1999). An introduction to numerical methods for stochastic differential equations, \emph{Acta Numer.} 8 197--246.
		
		\bibitem{Platen2010}
		Platen, E., Bruti-Liberati, N. (2010). Numerical solution of stochastic differential equations with jumps in finance: Stochastic Modelling and Applied Probability, \emph{Springer Science \& Business Media} 64.
		
		\bibitem{Przybylowicz2015a}
		Przyby{\l}owicz, P. (2015). Minimal asymptotic error for one-point approximation of SDEs with time-irregular coefficients, \emph{J. Comput. Appl. Math.} 282 98--110.
		
		\bibitem{Przybylowicz2015b}
		Przyby{\l}owicz, P. (2015). Optimal global approximation of SDEs with time-irregular coefficients in asymptotic setting, \emph{Appl. Math. Comput.} 270 441--457.
		
		\bibitem{Przybylowicz2014}
		Przyby{\l}owicz, P., Morkisz, P. (2014). Strong approximation of solutions of stochastic differential equations with time-irregular coefficients via randomized Euler algorithm, \emph{Appl. Numer. Math.} 78 80--94.
		
		\bibitem{Verena2024}
		Przyby{\l}owicz, P., Schwarz, V., Szölgyenyi, M. (2024). Randomized Milstein algorithm for approximation of solutions of jump–diffusion SDEs, \emph{J. Comput. Appl. Math.} 440.
		
		\bibitem{Przybylowicz2022}
		Przyby{\l}owicz, P., Sobieraj, M., Ste\c epie\'n, {\L}. (2022). Efficient approximation of SDEs driven by countably dimensional Wiener process and Poisson random measure, \emph{SIAM J. Numer. Anal.} 60(2) 824--855.
		
		\bibitem{Sabanis2013}
		Sabanis, S. (2013). A note on tamed Euler approximations, \emph{Electron. Commun. Probab.} 18(47) 1--10.
		
		\bibitem{Sabanis2016}
	{Sabanis, S. (2016). Euler approximations with varying coefficients: the case of superlinearly growing diffusion coefficients, \emph{Ann. Appl. Probab.} 26(4) 2083--2105.}
		
		\bibitem{Situ2006}
		Situ, R. (2005). Theory of stochastic differential equations with jumps and applications: Mathematical and analytical techniques with applications to engineering, \emph{Springer, New York}.
		
		\bibitem{Tretyakov2013}
		Tretyakov, M. V., Zhang, Z. (2013). A fundamental mean-square convergence theorem for SDEs with locally Lipschitz coefficients and its applications, \emph{SIAM J. Numer. Anal.} 51(6) 3135--3162.
		
		\bibitem{Tuckwell2003}
		Tuckwell, H. C., Rodriguez, R., Wan, F. Y. (2003). Determination of firing times for the stochastic Fitzhugh-Nagumo neuronal model, \emph{Neural Comput.} 15(1) 143--159.
		
	\end{thebibliography}
\end{document}